\newtheorem{theorem}{Theorem}[section]
\newtheorem{lemma}[theorem]{Lemma}
\newtheorem{corollary}[theorem]{Corollary}
\newtheorem{problem}[theorem]{Problem}
\theoremstyle{definition}
\newtheorem{definition}[theorem]{Definition}
\newtheorem{example}[theorem]{Example}
\newtheorem{algorithm}[theorem]{Algorithm}
\theoremstyle{remark}
\newtheorem{remark}[theorem]{Remark}
\numberwithin{equation}{section}
\newcommand{\lm}{\operatorname{lm}}
\newcommand{\lt}{\operatorname{lt}}
\newcommand{\<}{\langle}
\renewcommand{\>}{\rangle}
\DeclareSymbolFont{AMSb}{U}{msb}{m}{n}
\DeclareMathSymbol{\F}{\mathbin}{AMSb}{"46}
\DeclareMathSymbol{\N}{\mathbin}{AMSb}{"4E}
\DeclareMathSymbol{\Z}{\mathbin}{AMSb}{"5A}
\DeclareMathSymbol{\R}{\mathbin}{AMSb}{"52}
\DeclareMathSymbol{\C}{\mathbin}{AMSb}{"43}
\begin{document} \title[An Algorithms for Finding Symmetric Gr\"obner Bases]
{An Algorithm for Finding Symmetric \\ Gr\"obner Bases in Infinite Dimensional Rings}

\author{Matthias Aschenbrenner} \address{Department of Mathematics,
  University of California, Los Angeles, CA 90095.}%
\email{matthias@math.ucla.edu}%

\author{Christopher J. Hillar}
\address{Department of Mathematics, Texas A\&M University, College Station, TX 77843}
\email{chillar@math.tamu.edu}

\thanks{The first author is partially supported by the National
  Science Foundation Grant DMS 03-03618.  The work of the second
  author is supported under a National Science Foundation Graduate
  Research Fellowship.}

\keywords{Invariant ideal, partial ordering, symmetric group, Gr\"obner basis, polynomial reduction, algorithm}%


\begin{abstract}
A \textit{symmetric ideal} $I  \subseteq R = K[x_1,x_2,\ldots]$ is an ideal
that is invariant under the natural action of the infinite symmetric group.
We give an explicit algorithm to find Gr\"obner bases for symmetric
ideals in the infinite dimensional polynomial ring $R$.  This allows for symbolic computation
in a new class of rings.  In particular, we solve the ideal membership problem
for symmetric ideals of $R$.
\end{abstract} 

\maketitle 

\section{Introduction}
In computational algebra, one encounters the following general
problem.

\begin{problem}
Let $I$ be an ideal of a ring $R$ and let $f \in R$.  Determine whether
$f \in I$.
\end{problem}

When $R = K[x_1,\ldots,x_n]$ is a polynomial ring in $n$
indeterminates over a field $K$, this problem has a spectacular solution due to
Buchberger \cite{BB2} (for a nice exposition, see \cite{Cox2,Cox1}).

\begin{theorem}[Buchberger]\label{Buchberger}
Let $I = \<f_1,\ldots,f_m \>_R$ be an ideal of $R = K[x_1,\ldots,x_n]$.  Then, 
there is a computable, finite set of polynomials $G$ such that for every polynomial
$f$, we have $f \in I$ if and only if the polynomial reduction of $f$ with $G$ is $0$.
\end{theorem}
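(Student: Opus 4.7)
The plan is to construct the desired set $G$ via Buchberger's algorithm and then establish its two required properties (computability/finiteness and the reduction criterion) in turn. First, I would fix a monomial ordering $\leq$ on $R = K[x_1,\ldots,x_n]$ (e.g., lexicographic), which lets me define for each nonzero $g\in R$ a leading monomial $\lm(g)$, leading coefficient $\lc(g)$, and leading term $\lt(g)=\lc(g)\lm(g)$. With this data in hand, I would introduce polynomial reduction modulo a finite set $G=\{g_1,\ldots,g_s\}$: one rewrites any term of $f$ that is divisible by some $\lt(g_i)$ by subtracting an appropriate monomial multiple of $g_i$, and iterates. A standard induction on the leading term (using the well-ordering of monomials) shows this process terminates, yielding a normal form.

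Next, I would define the central object, the S-polynomial of two elements $g_i,g_j\in G$:
\[
S(g_i,g_j) \;=\; \frac{L}{\lt(g_i)}\,g_i \;-\; \frac{L}{\lt(g_j)}\,g_j, \qquad L=\operatorname{lcm}(\lm(g_i),\lm(g_j)),
\]
which is the canonical combination designed to cancel leading terms. Buchberger's algorithm then proceeds as follows: starting from $G=\{f_1,\ldots,f_m\}$, I reduce every $S(g_i,g_j)$ modulo the current $G$; if some reduction gives a nonzero remainder $r$, I append $r$ to $G$ and iterate. Termination is the first nontrivial point: each augmentation strictly enlarges the monomial ideal $\langle \lm(g)\,:\,g\in G\rangle$ in $R$, and Dickson's lemma (equivalently, the Hilbert basis theorem applied to monomial ideals) guarantees that such an ascending chain stabilizes, so the loop halts after finitely many steps. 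This yields the required finite, computable set $G$.

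The heart of the argument, and what I expect to be the main obstacle, is Buchberger's criterion: once all S-polynomials of pairs from $G$ reduce to $0$ modulo $G$, the set $G$ is a Gröbner basis of $I$, meaning $\langle \lt(g)\,:\,g\in G\rangle = \langle \lt(f)\,:\,f\in I\rangle$. The standard proof writes an arbitrary $f\in I$ as $\sum h_i g_i$ and, assuming toward a contradiction that $\lm(f)$ is strictly smaller than $\max_i \lm(h_i g_i)$, isolates the terms of maximal degree and rewrites the cancellation among them as a telescoping sum of S-polynomial expressions, each of which (by hypothesis) has a standard representation with strictly smaller leading monomial. Iterating lowers the maximum, contradicting well-foundedness of $\leq$.

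Finally, I would deduce the theorem from the Gröbner basis property: for any $f\in R$, reduction modulo $G$ produces a remainder $r$ all of whose monomials are not divisible by any $\lm(g_i)$; if $f\in I$, then $r=f-\sum q_i g_i\in I$, so $\lt(r)\in\langle\lt(g)\,:\,g\in G\rangle$ forces $r=0$, and conversely $r=0$ trivially gives $f\in I$. This establishes the if-and-only-if statement and completes the proof.
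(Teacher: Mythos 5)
Your proposal is correct and is precisely the classical argument (monomial order, reduction, S-polynomials, termination via the ascending chain of leading-term ideals, Buchberger's criterion, and the membership test via the remainder); the paper itself gives no proof of this theorem, simply citing Buchberger and the standard references, so your sketch matches the intended source. The only point worth flagging is that reduction modulo a non-Gr\"obner set need not yield a unique remainder, but your argument is unaffected since you only use that \emph{any} reduced remainder of an element of $I$ must vanish once $G$ is a Gr\"obner basis.
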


One remarkable feature of this result is that once such a \textit{Gr\"obner basis} $G$ for
$I$ is found, any new instance of the question ``Is $f \in I$''? can be solved
very quickly.  It is difficult not to stress the importance of Theorem \ref{Buchberger}; it
forms the backbone of the field of computational algebraic geometry and has
many applications, too numerous to list here.

We shall consider a different but related membership problem; one that at first glance 
would not  seem to be solvable as completely as Buchberger had done with 
$K[x_1,\ldots,x_n]$.
Let $X = \{x_1,x_2,\ldots\}$ be an infinite collection of
indeterminates, indexed by the positive integers, and let ${\mathfrak S}_{\infty}$ be the group of
permutations of $X$.  For a positive integer $N$, we will also let 
${\mathfrak S}_N$ denote the set of permutations of $\{1,\ldots,N\}$.
Fix a field $K$ and let $R = K[X]$ be the polynomial ring in the indeterminates $X$.  The group
${\mathfrak S}_{\infty}$ acts naturally on $R$: if $\sigma \in {\mathfrak
  S}_{\infty}$ and $f\in K[x_1,\dots,x_n]$, then 
\begin{equation}\label{groupaction}
\sigma f(x_1,\ldots,x_n) = f( x_{\sigma1},\dots,  x_{\sigma n})\in R.
\end{equation}

We motivate our discussion with the following concrete problem.
Questions of this nature arise in applications to chemistry  \cite{Ruch1, Ruch2, Ruch3}
and algebraic statistics \cite{SturmSull}.

\begin{problem}\label{motivprob}
Let $f_1 = x_1^3 x_3 + x_1^2 x_2^3$ and $f_2 = x_2^2 x_3^2 - x_2^2 x_1 + x_1 x_3^2$ and
consider the ideal of $R = K[X]$ generated by all permutations of $f_1$ and $f_2$:
\[ I = \<{\mathfrak  S}_{\infty}f_1, {\mathfrak  S}_{\infty}f_2\>_R.\]
Is the following polynomial involving $10$ indeterminates in $I$?
\begin{equation*} 
\begin{split}
f  \ & = -x_{10}^2x_{9}^2x_{5}^6-2x_{10}^2x_{9}x_{8}^3x_{5}^5-x_{10}^2x_{8}^6x_{5}^4
+3x_{10}^2x_{8}^2+3x_{10}^2x_{7}+3x_{10}x_{9}x_{7}x_{4}^3x_{3}^2x_{2}^2x_{1}\\ 
& +3x_{10}x_{9}x_{7}x_{4}^3x_{3}^2x_{1}^2-3x_{10}x_{9}x_{7}x_{4}^3x_{2}^2x_{1}^2
  -x_{9}^2x_{8}^7x_{7}x_{6}x_{5}^6-2x_{9}x_{8}^10x_{7}x_{6}x_{5}^5 \\ 
 & +x_{9}x_{5}^3x_{3}x_{2}x_{1}^3
 +x_{9}x_{5}^3x_{2}^4x_{1}^2+x_{9}x_{3}x_{2}^3x_{1}^4+x_{9}x_{2}^6x_{1}^3   -x_{8}^13x_{7}x_{6}x_{5}^4 
  -3x_{8}^2x_{7} \\ 
 & +x_{7}^2x_{6}x_{3}^3x_{2}^7+x_{7}^2x_{6}x_{3}^3x_{2}^5x_{1}-x_{7}^2x_{6}x_{3}x_{2}^7x_{1}+x_{5}x_{4}^2-3x_{5}x_{3}^2+2x_{5}x_{1}^2+x_{4}^2x_{3}^2 \\ & -2x_{3}^2x_{1}^2  +5x_{3}x_{1}^5+5x_{2}^3x_{1}^4.
\end{split}
\end{equation*}
More generally, given $f \in R$, how can we determine if $f \in I$?
\end{problem}

Naively, one could solve this problem using Buchberger's algorithm with truncated
polynomial rings $R_n = K[x_1,\ldots,x_n]$.
Namely, for each $n \geq 10$, compute
a Gr\"obner basis $G_n$ for the  ideal $I_n =  \<{\mathfrak  S}_{n}f_1, {\mathfrak  S}_{n}f_2\>_{R_n}$, 
and reduce $f$ by $G_n$.

There are several problems with this approach.  For one, this method requires
computation of many Gr\"obner bases (the bottleneck in any symbolic computation), the number
of which depends on the number of indeterminates appearing in $f$.  Additionally, it 
lacks the ability to solve new membership problems quickly, a powerful feature of Buchberger's
technique.  One might hope to at least restrict the number of Gr\"obner basis computations
in terms of the number of indeterminates appearing in $f$, however, the following simple example
should temper one's optimism a little.

\begin{example}\label{trunccounterex}
Let $I$ be the ideal generated by all permutations of $x_1 + x_2$ and $x_1 x_2$.
Then, $I = \<x_1,x_2,\ldots\>_R$, but 
\[ x_1 \notin \<x_1 + x_2, x_1 x_2\>_{K[x_1,x_2]}.\]
\end{example}

Our main result in this paper is an effective algorithm that solves the general membership
problem for symmetric ideals (such as those appearing in Problem \ref{motivprob}) and
has all of the important features of Buchberger's method.  
It is the first algorithm of its kind that we are aware of (although it is similar in spirit to
Buchberger's original algorithm).  Before we state our theorem
explicitly (Theorem \ref{mainalgthm}), we develop some notation.

Let $R[{\mathfrak S}_{\infty}]$ denote the (left) group ring of ${\mathfrak S}_{\infty}$ over $R$ 
with multiplication given by $f\sigma\cdot g\tau = fg(\sigma\tau)$ for $f,g\in R$ and
$\sigma,\tau\in {\mathfrak S}_{\infty}$, and extended by linearity.
The action (\ref{groupaction}) naturally gives $R$ the structure of a (left) module over the
ring $R[{\mathfrak S}_{\infty}]$.  For instance, we have 
\[[x_1 (12) + x_2 (23)] \cdot (x_1 x_3 + x_2) = x_1 x_2 x_3 + x_1^2 + x_1 x_2^2 + x_2 x_3.\]
An ideal $I \subseteq R$ is called  \textit{symmetric} if \[ {\mathfrak S}_{\infty}I := \{\sigma f
: \sigma \in {\mathfrak S}_{\infty}, \ f \in I\} \subseteq I.\] 
Symmetric ideals are then simply the $R[{\mathfrak S}_{\infty}]$-submodules of
$R$.

Also, for the purposes of this work, we will use the following notation.
Let $B$ be a ring and let $G$ be a subset of a $B$-module $M$.  Then
$\<f: f \in G \>_{B}$ will denote the $B$-submodule of $M$ generated
by the elements of $G$.  This notation greatly simplifies expressing symmetric
ideals in terms of their generators.

\begin{example}
$I = \<x_1,x_2,\ldots\>_R$ is an invariant ideal of $R$.  Written as 
a module over the group ring $R[{\mathfrak S}_{\infty}]$, it has the
compact presentation $I= \<x_1\>_{R[{\mathfrak S}_{\infty}]}$.
\end{example}

We may now state our main theorem.

\begin{theorem}\label{mainalgthm}
Let $I = \<f_1, \ldots, f_m\>_{R[{\mathfrak S}_{\infty}]}$ be a symmetric ideal of $R$.
Then, there is a computable, finite set of polynomials $G$ such that for every polynomial
$f$, we have $f \in I$ if and only if the polynomial reduction of $f$ with $G$ is $0$.
\end{theorem}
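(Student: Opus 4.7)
The plan is to develop an analogue of Buchberger's theory adapted to the $R[\mathfrak{S}_\infty]$-module structure on $R$. First I would fix an admissible monomial order $\leq$ on $R$ (for instance, a lexicographic order with $x_1 < x_2 < \cdots$) that is compatible enough with the $\mathfrak{S}_\infty$-action to allow a sensible notion of leading monomial $\lm(f)$ for each $f \in R$. Then I would define symmetric reduction: given a finite set $G \subseteq R$, declare that $f$ reduces modulo $G$ in one step if some term of $f$ equals $c \cdot x^\alpha \cdot \lm(\sigma g)$ for some $g \in G$, $\sigma \in \mathfrak{S}_\infty$, and monomial $x^\alpha$, in which case we pass to $f - c \cdot x^\alpha \cdot \sigma g$. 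Because each reduction strictly decreases the leading monomial (and $\leq$ is a well-order on monomials), reduction terminates, giving a well-defined, although non-unique, notion of normal form.

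Next, I would define a \emph{symmetric Gr\"obner basis} for $I$ to be a finite subset $G \subseteq I$ such that the symmetric monomial ideal $\<\lm(g) : g \in G\>_{R[\mathfrak{S}_\infty]}$ coincides with the symmetric ideal of leading monomials $\lm(I) := \<\lm(f) : f \in I\>_{R[\mathfrak{S}_\infty]}$. A routine leading-term argument (mimicking the classical one) then shows that $f \in I$ if and only if $f$ reduces to $0$ modulo $G$, so the $G$ promised by the theorem is precisely a symmetric Gr\"obner basis for $I$.

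To produce such a $G$ algorithmically, I would imitate Buchberger's completion procedure, starting from $G_0 = \{f_1,\ldots,f_m\}$ and forming, at each stage, all relevant \emph{symmetric} $S$-polynomials: for $g, h \in G$ and $\sigma, \tau \in \mathfrak{S}_\infty$, let $S_{\sigma,\tau}(g,h)$ be the usual $S$-polynomial of $\sigma g$ and $\tau h$. Reduce each $S_{\sigma,\tau}(g,h)$ modulo the current $G$; adjoin every nonzero remainder and iterate. The key observation making this finitary in each round is that, up to a further permutation acting on both $\sigma g$ and $\tau h$ simultaneously, only finitely many pairs $(\sigma,\tau)$ yield distinct $S$-polynomials, since the supports of $g$ and $h$ involve only finitely many variables.

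The main obstacle is termination, i.e. showing that this process halts after finitely many rounds. Equivalently, one must show that every strictly ascending chain of symmetric monomial ideals in $R$ stabilizes. This is false for arbitrary ideals in $R$ (the ring is not Noetherian) but holds for symmetric ones, and the underlying combinatorial fact is Higman's lemma: the divisibility order on monomials in infinitely many variables, modulo the $\mathfrak{S}_\infty$-action, is a well-partial-order. Granting this, any sequence of monomials $\lm(h_1), \lm(h_2), \ldots$ obtained from the successive remainders must have some $\lm(h_j)$ that is a multiple of a permutation of some earlier $\lm(h_i)$, so that $h_j$ would have been reducible, contradicting its appearance as a nonzero remainder. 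Hence the procedure terminates, and its output is a symmetric Gr\"obner basis for $I$.
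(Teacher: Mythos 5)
Your overall architecture---symmetric reduction, a Gr\"obner basis notion defined via leading monomials, a Buchberger-type completion over pairs of permutations, termination via a well-partial-order---matches the paper's. But there is a genuine gap at the heart of your correctness step. You define a symmetric Gr\"obner basis by the condition $\<\lm(g):g\in G\>_{R[{\mathfrak S}_\infty]}=\<\lm(f):0\neq f\in I\>_{R[{\mathfrak S}_\infty]}$ and assert that a ``routine leading-term argument'' then gives $f\in I$ iff $f$ reduces to $0$. This does not go through, because for a non-monomial $g$ one generally has $\lm(\sigma g)\neq\sigma\lm(g)$: knowing $\lm(f)=u\,\sigma\lm(g)$ does not let you cancel $\lt(f)$ by subtracting a multiple of $u\,\sigma g$, since $\lm(u\,\sigma g)$ may be a \emph{larger} monomial than $u\,\sigma\lm(g)$, so the step neither cancels the leading term nor decreases $\lm$, and both termination and the ``remainder zero iff member'' equivalence break down. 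The paper's entire apparatus---the symmetric cancellation partial order $\preceq$ of Definition \ref{defpartialord}, whose witnesses $\sigma$ must satisfy $\sigma u\leq\sigma v$ for all $u\leq v$, together with Lemma \ref{cancellation} and the characterization of witnesses as upward shifts (Theorem \ref{symordercharac})---exists precisely to repair this; $\lm(I)$ and Gr\"obner bases are defined with respect to $\preceq$, not with respect to membership in the symmetric monomial ideal. Remark \ref{carefulwitness} and Example \ref{surpriseGBex} are explicit warnings that the coarser notion you use is not the right one.

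The gap propagates to your termination argument. Higman's lemma does show that ``divisibility up to a permutation'' is a well-partial-order, but the relation actually governing reducibility is the finer order $\preceq$ (fewer pairs are related, because a witness must exist), and a sub-relation of a well-partial-order need not itself be one. That $\preceq$ is a well-partial-order is the main technical theorem of \cite{AH} and needs a separate minimal-bad-sequence argument; the paper imports it rather than deriving it from Higman's lemma alone. Your remaining ingredients---finitely many $S$-pairs per round up to a simultaneous permutation, which the paper implements by working in ${\mathfrak S}_N$ for an increasing truncation parameter $N$ with a stabilization test (Algorithms \ref{symGBtruncalg} and \ref{symGBalg})---are sound in spirit, but as written your proof is incomplete without the witness condition in the reduction and the well-partial-order statement for $\preceq$ itself.
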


We should remark here that the polynomial reduction appearing in Theorem
\ref{mainalgthm} is only a slight modification of the reduction in the context of
normal (finite dimensional) polynomial rings.  We will also call the sets $G$
appearing above \textit{Gr\"obner bases} for reasons which will be evident in the 
section that follows.

\begin{example}
The ideal $I = \<x_1^3 x_3 + x_1^2 x_2^3, 
x_2^2 x_3^2 - x_2^2 x_1 + x_1 x_3^2\>_{R[{\mathfrak S}_{\infty}]}$ from
Problem \ref{motivprob} has a Gr\"obner basis given by:
\[ G = {\mathfrak S}_{3} \cdot \{x_3 x_2 x_1^2, x_3^2 x_1 + x_2^4 x_1 - x_2^2 x_1, 
x_3 x_1^3, x_2 x_1^4, x_2^2 x_1^2\}.\]
Once $G$ is found, testing whether a polynomial $f$ is in $I$ is computationally fast;
for instance, one finds that $f \in I$ for the polynomial encountered in Problem \ref{motivprob}.
\qed
\end{example}

In Section \ref{gbinfinite}, we discuss the history of this problem and state 
some of the foundational results that are ingredients in the proof of Theorem \ref{mainalgthm}.
In particular, we discuss there an important partial order on monomials that respects
the action of the symmetric group.  Section \ref{symreduction} briefly reviews the notion of 
reduction that occurs in our more general context,
and finally, in Section \ref{algorithm}, we describe our algorithm.
To keep the paper as expository as possible, we have left out many of the (technical)
proofs that will appear in a much longer version of this paper.

\section{Gr\"obner Bases for Symmetric Ideals}\label{gbinfinite}

The following was proved recently in \cite{AH}.  It says that while ideals of $R = K[X]$
are too big in general, those with extra structure have finite presentations.

\begin{theorem}\label{onevarfinitegenthm}
Every symmetric ideal of $R$ is finitely generated as an $R[ {\mathfrak S}_{\infty}]$-module.  
In other words, $R$ is a Noetherian $R[{\mathfrak S}_{\infty}]$-module.
\end{theorem}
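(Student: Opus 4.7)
The plan is to extract a combinatorial well-partial-order lemma on monomials from Higman's theorem, and then deduce Noetherianity by a leading-term reduction argument adapted to the symmetric setting.

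First, I would introduce the quasi-order $\preceq$ on monomials of $R$ defined by $u\preceq v$ iff $\sigma(u)\mid v$ for some $\sigma\in{\mathfrak S}_\infty$. The key combinatorial fact to prove is that $\preceq$ is a \emph{well-partial-order}: every infinite sequence $u_1,u_2,\ldots$ of monomials contains a pair $i<j$ with $u_i\preceq u_j$. I would establish this by encoding each monomial as the finite word over $\N$ obtained by listing its nonzero exponents in weakly decreasing order. A short swap argument shows that $u\preceq v$ translates exactly into the Higman subword-embedding relation between the corresponding sorted exponent words (with $\leq$ as the comparison on letters). The classical Higman lemma applied to the well-ordered alphabet $(\N,\leq)$ then yields the desired wpo property.

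Second, armed with this wpo, I would derive finite generation as an $R[{\mathfrak S}_\infty]$-module. Given a symmetric ideal $I$, fix an admissible monomial order on each finite-variable subring $R_n=K[x_1,\ldots,x_n]$ and form the set $L$ of monomials obtained by taking, for each nonzero $f\in I$, the leading monomial of $f$ inside the smallest $R_n$ containing it, then closing under the ${\mathfrak S}_\infty$-action and under divisibility. By construction $L$ is $\preceq$-upward-closed, so by the first step it has only finitely many $\preceq$-minimal elements $u_1,\ldots,u_m$. Pulling each $u_i$ back to an $f_i\in I$ whose leading monomial is a permuted divisor of $u_i$, one proves $I=\<f_1,\ldots,f_m\>_{R[{\mathfrak S}_\infty]}$ by a Gr\"obner-style reduction: any nonzero $f\in I$ has leading monomial $\lm(f)=\mu\cdot\sigma(u_i)$ for some monomial $\mu$, some $\sigma$ mapping the support of $u_i$ into the variables already occurring in $f$, and some $i$; subtracting the appropriate scalar multiple of $\mu\cdot\sigma(f_i)$ strictly lowers $\lm(f)$. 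Since each such step remains inside a fixed finite-variable subring on which the order is a well-order, the reduction terminates and expresses $f$ as an $R[{\mathfrak S}_\infty]$-linear combination of the $f_i$.

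The main obstacle is the wpo property of $\preceq$. It cannot be reduced to Dickson's lemma inside any fixed $R_n$, because an infinite sequence of monomials in $R$ can involve unboundedly many variables, so genuine use of Higman's lemma (rather than merely Dickson's lemma) is essential, and the sorted-exponent encoding is the device that makes it applicable. A secondary but delicate technical point is that no admissible monomial order on all of $R$ can be ${\mathfrak S}_\infty$-invariant, so leading monomials do not behave functorially under $\sigma$; this is why the second step works with the symmetric-and-divisibility closure $L$ rather than directly with the raw initial ideal, and why the reduction step must be arranged to keep the active variable set finite.
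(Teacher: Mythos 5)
Your overall architecture --- a Higman-type well-partial-order on monomials followed by a leading-term reduction --- is exactly the strategy of the cited source [AH] for this theorem (the paper itself only quotes the result). But there is a genuine gap in how the two halves of your argument fit together: the order you prove to be a wpo is not the order your reduction step needs. By sorting the nonzero exponents into weakly decreasing order before applying Higman's lemma, you establish the wpo property only for the coarse relation ``$\sigma(u)\mid v$ for some $\sigma\in{\mathfrak S}_\infty$.'' The reduction step, however, requires a witness $\sigma$ for which $\lm(\sigma f_i)=\sigma\,\lm(f_i)$; for an arbitrary permutation with $\sigma(\lm(f_i))\mid\lm(f)$ this fails (e.g.\ $f_i=x_2+x_1^2$ and $\sigma=(12)$ give $\lm(\sigma f_i)=x_2^2\neq\sigma(x_2)=x_1$), so subtracting $c\mu\sigma(f_i)$ can introduce a strictly larger leading term and the reduction need not terminate. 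Your closing remarks show you noticed that leading monomials are not functorial under $\sigma$, but closing $L$ under the action and divisibility, or confining the reduction to a finite variable set, does not repair this: the failure already occurs inside $K[x_1,x_2]$. This is precisely why the coarse order is the wrong one --- Example \ref{surpriseGBex} shows that $x_1x_2^2$ lies in the coarse upward closure of $x_1^2x_3$ yet a minimal Gr\"obner basis of $\<x_1^2x_3\>_{R[{\mathfrak S}_\infty]}$ must contain two elements.

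The missing idea is the refinement to the symmetric cancellation order of Definition \ref{defpartialord}: one must restrict to witnesses $\sigma$ satisfying $\sigma u\leq\sigma v$ for all $u\leq v$ (equivalently, by Theorem \ref{symordercharac}, to upward shifts of $v$), for then Lemma \ref{cancellation} guarantees that the reduction strictly lowers $\lm(f)$. One must then prove that this \emph{finer} relation is still a wpo --- this does not follow from the wpo property of your coarser relation, since the finer relation compares fewer pairs. The fix to your encoding is to keep the exponent vector in variable-index order (zeros included up to the top variable) rather than sorting it: a Higman embedding $\phi$ between such words is a strictly increasing injection, hence satisfies $\phi(i)\geq i$, and by Lemma \ref{shiftcharac} corresponds exactly to an upward shift, i.e.\ to an admissible witness. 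With that change your two steps do combine into a correct proof, and it is then essentially the proof of [AH].
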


\begin{remark}
Symmetric ideals can be arbitrarily complex in the following sense.  For each $n$,
there are symmetric ideals of $R$ that cannot have fewer than $n$ $R[{\mathfrak S}_{\infty}]$-module
generators \cite{HW}.  Moreover, such ideals are not always monomial.
\end{remark}

Theorem \ref{onevarfinitegenthm} was motivated by finiteness questions in
chemistry \cite{Ruch1,Ruch2, Ruch3} and algebraic statistics \cite{SturmSull} 
involving chains of symmetric ideals $I_k$ ($k = 1,2,\ldots$)
contained in finite dimensional polynomial rings $R_k$. 
We refer the reader to \cite{AH} for more details.

In the course of proving Theorem \ref{onevarfinitegenthm}, it was shown that,
in a certain sense, a symmetric ideal $I$ has a finite
minimal Gr\"obner basis (see below for a review of these
concepts).  Moreover, the existence of such a set of generators solves
the ideal membership problem in $R$.  

\begin{theorem}
Let $G$ be a Gr\"obner basis for a symmetric ideal $I$.  Then $f \in I$ 
if and only if $f$ has normal form $0$ with respect to $G$.
\end{theorem}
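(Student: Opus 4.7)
The plan is to adapt the standard Buchberger argument to the symmetric setting, leaning on the partial order on monomials discussed in Section \ref{gbinfinite}. The argument splits cleanly into the two implications.

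The ``if'' direction is essentially by unwinding definitions. A single reduction step of $f$ modulo $G$ subtracts off an element of the form $c \cdot \sigma(g)$ with $c \in R$, $\sigma \in {\mathfrak S}_{\infty}$, and $g \in G$. Since $I = \langle G \rangle_{R[{\mathfrak S}_{\infty}]}$ is closed under such operations, $f$ and its normal form differ by an element of $I$; so if the normal form is $0$, then $f \in I$.

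For the ``only if'' direction, I would suppose $f \in I$ and let $h$ be a normal form of $f$ with respect to $G$, obtained by performing symmetric reductions until no further reduction is possible. Termination of this process relies on the well-partial-order property of monomials under the symmetric divisibility order from Section \ref{gbinfinite} (in the spirit of Higman's lemma), which precludes an infinite strictly decreasing sequence of leading monomials among successive remainders. Then $h \in I$ and $h$ is in normal form. Suppose for contradiction that $h \neq 0$, and let $m = \lm(h)$ be its leading monomial. By the defining property of a Gr\"obner basis, the leading monomial of every nonzero element of $I$ is expressible as $u \cdot \sigma(\lm(g))$ for some $g \in G$, some $\sigma \in {\mathfrak S}_{\infty}$, and some monomial $u$. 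Applied to $m$, this furnishes a further reduction of $h$, contradicting the choice of $h$ as a normal form. Hence $h = 0$.

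The main obstacle is ensuring that the machinery behind these words is actually well-defined in the infinite-dimensional symmetric setting: specifically, (i) the notion of symmetric divisibility between monomials together with the fact that the induced partial order is a well-partial-order, guaranteeing termination of reduction; and (ii) the equivalence between $G$ being a Gr\"obner basis and $\lm(I)$ being generated (as a symmetric monomial ideal) by $\{\lm(g) : g \in G\}$. With both in hand---as is the case from Section \ref{gbinfinite}---the contradiction argument runs exactly as in the classical Buchberger setting, and the bulk of the novelty is in the underlying combinatorial order theory rather than in the proof of this theorem itself.
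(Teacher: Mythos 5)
Your overall strategy is the paper's: the ``if'' direction is the content of Lemma \ref{reduction} (each reduction step subtracts a term times $\sigma g$ with $g\in G$, so $f$ minus any normal form lies in $\<G\>_{R[{\mathfrak S}_\infty]}\subseteq I$), and the ``only if'' direction is the implication (1)$\Rightarrow$(3) of Lemma \ref{char GB}: a nonzero reduced $h\in I$ would have $\lm(h)\in\lm(I)=\lm(G)$, hence $\lm(g)\preceq\lm(h)$ for some $g\in G$, making $h$ reducible --- a contradiction.

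Two of your justifications need repair. First, termination of reduction does not follow from the well-partial-order property of the symmetric divisibility order $\preceq$: successive remainders $h_0\to h_1\to\cdots$ need not have $\preceq$-comparable leading monomials at all. What decreases is $\lm(h_i)$ in the lexicographic \emph{total} order $\leq$ (this is Lemma \ref{cancellation}), and termination follows because $\leq$ is a well-ordering of $\Omega$. The wpo machinery in the spirit of Higman's lemma is what guarantees the \emph{existence} of finite Gr\"obner bases, not the termination of reduction. Second, in the reducibility step it is not enough that $\lm(h)=u\,\sigma\lm(g)$ for \emph{some} permutation $\sigma$; you need $\sigma$ to be a \emph{witness} of $\lm(g)\preceq\lm(h)$ in the sense of Definition \ref{defpartialord}, since otherwise $\lm(u\sigma g)$ may differ from $u\sigma\lm(g)$ and the subtraction need not lower the leading monomial (cf.\ Remark \ref{carefulwitness}). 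Your gloss of the Gr\"obner property as ``$\lm(I)$ is generated as a symmetric monomial ideal by the $\lm(g)$'' elides exactly this point; the paper's definition of $\lm(I)$ via $\preceq$ builds the witness condition in, and with that reading both directions of your argument go through.
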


The normal form reduction we are talking about here is a modification of the
standard notion in polynomial theory and Gr\"obner bases;  
we describe it in more detail below.
Unfortunately, the techniques used to
prove finiteness in \cite{AH} are nonconstructive and therefore do not
give methods for computing Gr\"obner bases in $R$.
Our main result is an algorithm for finding these bases.

\begin{theorem}
Let $I = \<f_1, \ldots, f_m\>_{R[{\mathfrak S}_{\infty}]}$ be a symmetric ideal of $R$.
There exists an effective algorithm to compute a finite minimal Gr\"obner basis for $I$.
\end{theorem}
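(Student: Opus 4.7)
The plan is to mimic Buchberger's classical algorithm, replacing ordinary monomial divisibility by the symmetric partial order $\preceq$ from Section \ref{gbinfinite}, and ordinary polynomial reduction by the symmetric reduction of Section \ref{symreduction}. After fixing a monomial order on $R$ that is compatible with the $\mathfrak{S}_\infty$-action (so that $\lm(\sigma f) = \sigma\,\lm(f)$ for every $f \in R$ and $\sigma \in \mathfrak{S}_\infty$), I would define, for $f,g\in R$ and $\sigma,\tau\in\mathfrak{S}_\infty$, a \emph{symmetric S-polynomial} to be the ordinary Buchberger S-polynomial $S(\sigma f,\tau g)$ formed from the permuted leading terms.

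Although the family of pairs $(\sigma,\tau)$ is a priori infinite, only finitely many are essential at any stage: whenever $\lm(\sigma f)$ and $\lm(\tau g)$ are variable-disjoint, the resulting S-polynomial reduces to zero automatically by the standard Buchberger criterion applied in a large enough truncation $R_N$; and the remaining ``overlapping'' pairs can be brought via a common permutation to a canonical form supported on an initial segment $x_1,\ldots,x_N$ with $N \leq \deg f + \deg g$. This leaves only finitely many canonical symmetric S-pairs for each pair of elements in the current basis. The algorithm itself is then the obvious loop: initialize $G := \{f_1,\ldots,f_m\}$; enumerate the finite list of canonical symmetric S-pairs from $G\times G$; compute the symmetric reduction of each $S(\sigma f,\tau g)$ modulo $G$; append every nonzero remainder to $G$; and repeat until all canonical symmetric S-polynomials reduce to zero. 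A final pass discards any $h\in G$ whose leading monomial satisfies $\lm(h)\succeq\lm(h')$ for some other $h'\in G$, yielding minimality.

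The main obstacle is termination. In the classical case this follows from Dickson's lemma, since the leading monomials of freshly added polynomials form an antichain in $\N^n$. Here the required analogue is that $\preceq$ is a \emph{well-partial-order} on the monomials of $R$, so that every antichain is finite; this is the combinatorial statement underpinning Theorem \ref{onevarfinitegenthm} and is what actually drives the finiteness result of \cite{AH}. Since every element newly added to $G$ has leading monomial incomparable under $\preceq$ to all monomials previously in $G$ (otherwise the reduction would have continued), the sequence of leading monomials of the added elements forms an antichain, which must be finite. Hence only finitely many iterations occur.

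Correctness of the output follows from a symmetric analogue of Buchberger's criterion: once every canonical symmetric S-polynomial has symmetric normal form zero modulo $G$, one argues as in the finite-dimensional case that an arbitrary $f\in I$ must also have normal form zero, so $G$ is a Gr\"obner basis in the sense of the previous theorem, and the pruning pass delivers minimality. The technically delicate point -- and the one I would expect to absorb most of the work in the longer version -- is verifying that reduction to disjoint-variable canonical representatives is genuinely sound, i.e.\ that an S-polynomial which reduces to zero in some truncation $R_N$ also reduces to zero using the symmetric reduction in $R$ itself; this is where the compatibility of the term order with the $\mathfrak{S}_\infty$-action, and the precise definition of symmetric reduction, must be brought to bear.
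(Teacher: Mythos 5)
Your overall architecture---a Buchberger-style completion on symmetric S-pairs, with termination driven by the well-partial-order result underlying \cite{AH}---matches the paper's Algorithms \ref{symGBtruncalg} and \ref{symGBalg} in spirit. But the proposal rests on a premise that cannot be satisfied: there is no term order on $\Omega$ with $\lm(\sigma f)=\sigma\,\lm(f)$ for \emph{all} $f$ and $\sigma$, since such an order would have to satisfy $u<v\Rightarrow\sigma u<\sigma v$ for every permutation, which already fails for $u=x_1$, $v=x_2$, $\sigma=(12)$ (concretely, for $f=x_1+x_2^2$ in lex order one has $\lm(\sigma f)=x_2\neq x_1^2=\sigma\,\lm(f)$). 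The entire apparatus of the paper---the symmetric cancellation order of Definition \ref{defpartialord}, the notion of a \emph{witness}, Lemma \ref{cancellation}, and the caution of Remark \ref{carefulwitness}---exists precisely because leading monomials are \emph{not} equivariant. This is not cosmetic: your reduction of the infinite family of pairs $(\sigma,\tau)$ to finitely many ``canonical'' ones supported on $x_1,\dots,x_N$ with $N\leq\deg f+\deg g$ implicitly assumes that if one S-polynomial is a global permutation $\rho$ of another, then one reduces to zero iff the other does. Since symmetric reduction is not $\rho$-equivariant (for the same reason the order is not), that step is unjustified, and it is exactly the point where the paper takes a different route: Algorithm \ref{symGBtruncalg} runs over \emph{all} pairs $(\sigma,\tau)\in{\mathfrak S}_N\times{\mathfrak S}_N$, and Algorithm \ref{symGBalg} increments the truncation order indefinitely, stopping only when passing to the next truncation yields nothing new; no a priori degree bound on the truncation is claimed (cf.\ Example \ref{trunccounterex}).

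Two smaller points. Your termination argument actually produces a \emph{bad sequence} for $\preceq$ (no earlier leading monomial lies below a later one), not an antichain---a newly added leading monomial may well be $\preceq$ an earlier one---but since $\preceq$ is a well-partial-order this still forces finiteness, so the idea survives once restated. And the classical ``disjoint leading terms'' criterion would need a fresh proof in this setting, since its standard justification uses properties of ordinary division that do not transfer verbatim to the $SG_\sigma$-reduction of Section \ref{symreduction}.
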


\begin{corollary}
There exists an effective algorithm to solve the ideal membership problem
for symmetric ideals in the infinite dimensional ring $K[x_1,x_2,\ldots]$.
\end{corollary}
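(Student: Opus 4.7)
The plan is to combine the preceding theorem with the earlier characterization of ideal membership via normal forms. Given generators $f_1, \ldots, f_m$ of the symmetric ideal $I = \<f_1, \ldots, f_m\>_{R[{\mathfrak S}_{\infty}]}$, I would first invoke the algorithm from the preceding theorem to compute a finite Gr\"obner basis $G$ for $I$; this is a one-time preprocessing step depending only on the generators. To decide whether a query polynomial $f$ lies in $I$, I would then apply the symmetric polynomial reduction described in Section \ref{symreduction} to reduce $f$ with respect to $G$, and answer ``$f \in I$'' precisely when the resulting normal form is zero. Correctness is exactly the content of the earlier theorem asserting that $f \in I$ iff $f$ has normal form $0$ with respect to $G$.

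The only point requiring verification is that the normal form reduction itself is effective. Although the set of available reducers is in principle infinite (every $\sigma g$ for $\sigma \in {\mathfrak S}_{\infty}$ and $g \in G$), any query polynomial $f$ involves only finitely many indeterminates, and every candidate reducer whose leading term divides a monomial of the current polynomial must also be supported on a finite (and explicitly bounded) set of indeterminates. So at each step one only needs to search over finitely many $\sigma g$, and this search is effective. Termination of the reduction follows from the well-foundedness of the partial order on monomials that respects the ${\mathfrak S}_{\infty}$-action (discussed in Section \ref{gbinfinite}): each reduction step strictly descends in this order, so only finitely many steps can occur.

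The main (mild) obstacle is the effectivity argument for the reduction step itself, in particular bounding a priori which permutations $\sigma$ need to be tested when matching leading terms. Once this bound is in place — which is essentially a counting argument on the finite supports of $f$ and of the elements of $G$ — the corollary follows immediately by stringing together the preprocessing (compute $G$) and the query-time step (reduce $f$ modulo $G$ and test the result against $0$).
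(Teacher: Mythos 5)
Your proposal is correct and matches the paper's (implicit) argument: compute a finite Gr\"obner basis via the preceding theorem, then decide membership by reducing to normal form, with effectivity of each reduction step supplied by the fact that witnesses $\sigma$ may be sought in ${\mathfrak S}_N$ for $N$ bounded by the support of the current polynomial (this is exactly Algorithm \ref{vwcompalg} together with Theorem \ref{symordercharac}). The only nitpick is that termination comes from strict descent of $\lm$ in the lexicographic well-ordering $\leq$ (via Lemma \ref{cancellation}), not from well-foundedness of the partial order $\preceq$ itself.
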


The following is a brief review of the Gr\"obner basis theory for symmetric
ideals necessary that we will need (see \cite{AH} for more details).
Let us first note that an infinite permutation acting on
a polynomial may be replaced with a finite one.
\begin{lemma}\label{infpermfiniteperm}
Let $\sigma \in {\mathfrak S}_\infty$ and $f \in R$.  Then there
exists a positive integer $N$ and $\tau \in {\mathfrak S}_N$ such that 
$\tau f = \sigma f$.
\end{lemma}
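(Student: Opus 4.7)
The plan is to exploit the fact that any element $f\in R$ involves only finitely many indeterminates, so only finitely many values of $\sigma$ matter. Concretely, I would first pick $n$ large enough that $f\in K[x_1,\ldots,x_n]$, and then choose
\[ N \ :=\ \max\{n,\ \sigma 1,\ \sigma 2,\ \ldots,\ \sigma n\}, \]
so that $\{1,\ldots,n\}$ and its image $S := \{\sigma 1,\ldots,\sigma n\}$ both lie inside $\{1,\ldots,N\}$.

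Next, I would build $\tau\in {\mathfrak S}_N$ by forcing $\tau i = \sigma i$ for $i\in\{1,\ldots,n\}$; this partially defined map is an injection of $\{1,\ldots,n\}$ into $\{1,\ldots,N\}$ with image $S$. To extend it to a permutation of $\{1,\ldots,N\}$, I simply choose any bijection from $\{n+1,\ldots,N\}$ to $\{1,\ldots,N\}\setminus S$, which exists because both sets have cardinality $N-n$.

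Finally, I would conclude by appealing directly to the definition of the action in \eqref{groupaction}: since $f$ only involves $x_1,\ldots,x_n$, the polynomial $\sigma f$ depends on $\sigma$ only through the values $\sigma 1,\ldots,\sigma n$, and by construction $\tau$ agrees with $\sigma$ on these indices. Hence $\tau f = \sigma f$, as required.

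There is no real obstacle here; the statement is essentially a bookkeeping observation. The only point requiring a hint of care is the extension step, where one must verify that $N-n$ indices are available on both sides so that the partial map can be completed to an element of ${\mathfrak S}_N$; this is immediate from the choice of $N$.
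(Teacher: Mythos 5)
Your proof is correct; the paper itself omits the proof of this lemma (deferring technical proofs to a longer version), and your argument — restrict to the finitely many indeterminates actually occurring in $f$, match $\sigma$ on those indices, and complete the partial injection to a permutation of $\{1,\ldots,N\}$ by counting — is exactly the standard bookkeeping argument one would give. The one point of care you flag (that both leftover sets have cardinality $N-n$) is handled correctly by your choice of $N$.
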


Let $\Omega$ be the set of monomials in indeterminates $x_1, x_2, \ldots$, 
including the constant monomial $1$.  Order the variables $x_1 < x_2 < \cdots$, and let 
$\leq$ be the induced lexicographic (total) well-ordering of monomials.  Given
a polynomial $f \in R$, we set lm$(f)$ to be the leading monomial of $f$ with
respect to $\leq$ and $\lt(f)$ to be its leading term.
The following partial ordering on $\Omega$ respects the action of ${\mathfrak S}_\infty$
and refines the division partial order on $\Omega$.

\begin{definition}\label{defpartialord}(The symmetric cancellation partial ordering)
\[v \preceq w \quad
:\Longleftrightarrow \quad \begin{cases} &\text{\parbox{200pt}{$v \leq
      w$ and there exist $\sigma \in {\mathfrak S}_\infty$ such that $\sigma v|w$ 
      and $\sigma u \leq \sigma v$ for all $u \leq v$.}}\end{cases}\]
\end{definition}

\begin{remark} A permutation $\sigma$ in the definition need
not be unique.  Also, we say that such a permutation \textit{witnesses} $v\preceq w$.
We will give a more computationally useful description of this partial order
in Theorem \ref{symordercharac} below.
\end{remark}

\begin{example}
As an example of this relation, consider the following
chain, \[x_1^3 \preceq x_1^2x_2^3 \preceq x_1^{\phantom{2}} x_2^{2} x_3^3.\]  To
verify the first inequality, notice that $x_1^2 x_2^3 = x_1^2 \sigma
(x_1^3)$, in which $\sigma$ is the transposition $(12)$.  If $u =
x_1^{u_1}\cdots x_n^{u_n} \leq x_1^3$, then it follows that $n =
1$ and $u_1 \leq 3$.  In particular, $\sigma u = x_2^{u_1}
\leq x_2^3  = \sigma x_1^3$.  Verification of the other inequality is similar.  

Alternatively, one may use Lemmas \ref{oneshiftuplem},  \ref{twoshiftuplem}, and
\ref{addtoendlem} to produce these and many other examples of such relations.  \qed
\end{example}

Although this partial order appears technical, it can be reconstructed from
the following two properties.  The first one says that the leading monomial 
of $\sigma f$ is the same as $\sigma \text{lm}(f)$ whenever there is a witness
$\sigma$ for lm$(f)$, while the latter can be viewed as a kind of  ``$S$-pair" leading term cancellation.

\begin{lemma}\label{cancellation}
Let $f$ be a nonzero polynomial and $w\in \Omega$.  Suppose that
$\sigma\in {\mathfrak S}_\infty$ witnesses $\text{\rm lm}(f)\preceq w$, and let $u\in \Omega$
with $u\sigma \text{\rm lm}(f)=w$. Then $\text{\rm lm}(u\sigma f)=u\sigma\text{\rm lm}(f)$.
\end{lemma}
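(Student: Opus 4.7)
The plan is to reduce the claim to two standard facts about monomial orderings and a short additional argument that uses the witness hypothesis. Set $v=\text{lm}(f)$ and write
\[
f = c\,v + \sum_{i} c_i v_i,
\]
with $c,c_i\in K^\times$ and $v_i < v$ in the lexicographic order. Applying $\sigma$ termwise (which I would justify by Lemma \ref{infpermfiniteperm}, replacing $\sigma$ by some $\tau\in\mathfrak S_N$ that agrees with $\sigma$ on all variables occurring in $f$, so that the action is simply a relabelling of indeterminates) gives
\[
\sigma f = c\,(\sigma v) + \sum_{i} c_i (\sigma v_i).
\]

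Next I would show that $\text{lm}(\sigma f)=\sigma v$. The witness hypothesis for $v\preceq w$ says precisely that $\sigma u\le \sigma v$ for every monomial $u\le v$, so in particular $\sigma v_i \le \sigma v$ for each $i$. To upgrade this to a strict inequality $\sigma v_i < \sigma v$, I would observe that the induced action of $\sigma$ on $\Omega$ is a bijection (its inverse is the action of $\sigma^{-1}$), so $v_i\ne v$ forces $\sigma v_i\ne \sigma v$. Therefore $\sigma v$ is strictly greater than every other monomial appearing in $\sigma f$, and $\text{lm}(\sigma f)=\sigma v$.

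Finally, multiplication by the monomial $u$ preserves leading monomials, because the lex order $\le$ on $\Omega$ is a monomial ordering: if $m_1<m_2$ then $u m_1 < u m_2$. Hence
\[
\text{lm}(u\sigma f) \;=\; u\cdot \text{lm}(\sigma f) \;=\; u\,\sigma v \;=\; w,
\]
as desired.

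There is no serious obstacle here; the only place one must be mildly careful is the passage from $\sigma v_i\le\sigma v$ (provided directly by the definition of a witness) to the strict inequality, which needs the injectivity of the $\sigma$-action on monomials. The rest is the standard interaction between leading monomials and multiplication by monomials, combined with Lemma \ref{infpermfiniteperm} to ensure that applying $\sigma$ is well-behaved on the finitely many terms of $f$.
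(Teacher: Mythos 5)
Your proof is correct, and since the paper omits the proof of this lemma (deferring technical arguments to \cite{AH}), there is nothing to contrast it with: your argument — using the witness condition to get $\sigma v_i \le \sigma v$ for the lower terms, injectivity of the $\sigma$-action on monomials to make this strict, and compatibility of the lexicographic order with multiplication by $u$ — is exactly the standard one. The one point genuinely requiring care, the upgrade from $\le$ to $<$, is handled correctly.
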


\begin{lemma}
Suppose that $m_1 \preceq m_2$ and $f_1, f_2$ are two polynomials with 
lexicographic leading monomials $m_1$ and $m_2$, respectively.  
Then there exists a permutation $\sigma$
and $0 \neq c \in K$ such that \[ h = f_2 - c \frac{m_2}{\sigma m_1} \sigma f_1\] 
consists of monomials (lexicographically) smaller than $m_2$.
\end{lemma}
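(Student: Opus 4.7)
The plan is to invoke the hypothesis $m_1 \preceq m_2$ via Definition \ref{defpartialord} to produce a witnessing permutation, then apply Lemma \ref{cancellation} to promote that witness from an assertion about the monomial $m_1$ to one about the leading monomial of an associated multiple of $f_1$. Once both $f_2$ and this multiple of $\sigma f_1$ are known to have leading monomial exactly $m_2$, the desired cancellation becomes a one-line computation, exactly as in the classical $S$-pair construction.

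More precisely, since $m_1 \preceq m_2$, the definition furnishes a permutation $\sigma \in {\mathfrak S}_\infty$ such that $\sigma m_1 \mid m_2$ and $\sigma u \leq \sigma m_1$ for every monomial $u \leq m_1$. Set $u := m_2/\sigma m_1 \in \Omega$, so that $u \sigma m_1 = m_2$. Because $\text{lm}(f_1) = m_1$, Lemma \ref{cancellation} applied with $w = m_2$ yields
\[
\text{lm}(u \sigma f_1) \ = \ u \sigma\, \text{lm}(f_1) \ = \ u \sigma m_1 \ = \ m_2.
\]
The leading coefficient of $u \sigma f_1$ equals $\text{lc}(f_1)$, since permuting the variables of $f_1$ and multiplying by the monic monomial $u$ does not alter the coefficient of the (new) leading monomial.

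Now set $c := \text{lc}(f_2)/\text{lc}(f_1) \in K^\times$. Then the polynomial
\[
h \ = \ f_2 - c \cdot \tfrac{m_2}{\sigma m_1}\, \sigma f_1 \ = \ f_2 - c \cdot u \sigma f_1
\]
has the coefficient of $m_2$ equal to $\text{lc}(f_2) - c\cdot \text{lc}(f_1) = 0$. Every other monomial appearing in $f_2$ is by assumption lexicographically strictly smaller than $m_2$, and every other monomial appearing in $u \sigma f_1$ is obtained by multiplying $u$ by a monomial of $\sigma f_1$ strictly smaller than $\sigma m_1$; since the lexicographic order is multiplicative, each such product is strictly smaller than $u \sigma m_1 = m_2$. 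Therefore every monomial of $h$ is strictly smaller than $m_2$, as required.

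There is no real obstacle here: the work lies entirely in extracting the right witness from $m_1 \preceq m_2$ and then reducing to Lemma \ref{cancellation}. The only point that requires a brief verification is that multiplication by the monomial $u$ preserves the lexicographic order on the monomials of $\sigma f_1$, which is immediate from the definition of the lex order.
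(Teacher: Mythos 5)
Your proof is correct and is exactly the argument the paper intends: the paper omits the proof of this lemma, but the route through Definition~\ref{defpartialord} (extracting a witness $\sigma$ with $\sigma m_1\mid m_2$) and Lemma~\ref{cancellation} (to conclude $\lm(u\sigma f_1)=u\sigma m_1=m_2$ with leading coefficient $\lc(f_1)$, where $u=m_2/\sigma m_1$) is the standard one, and $c=\lc(f_2)/\lc(f_1)$ then kills the coefficient of $m_2$. Your final paragraph re-derives what Lemma~\ref{cancellation} already gives (all non-leading monomials of $u\sigma f_1$ are $<m_2$), but that is harmless.
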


The following two lemmas allow us to generate many relations, including the
ones in the above example.  Proofs can also be found in \cite{AH}.

\begin{lemma}\label{oneshiftuplem}
Suppose that $x_1^{a_1}\cdots x_n^{a_n} \preceq x_1^{b_1}\cdots
x_n^{b_n}$ where $a_i,b_j\in\N$, $b_n>0$. Then for any $c \in \N$, we
have $x_1^{a_1}\cdots x_n^{a_n} \preceq x_1^c x_2^{b_1}\cdots
x_{n+1}^{b_n}$.
\end{lemma}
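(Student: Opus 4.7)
My plan is to take the witness $\sigma$ for $v\preceq w$ and compose it with a cyclic shift to produce a witness for $v\preceq w'$. Concretely, let $\tau\in{\mathfrak S}_\infty$ be the cycle $(1\ 2\ \cdots\ n+1)$, so that $\tau(i)=i+1$ for $1\le i\le n$, $\tau(n+1)=1$, and $\tau$ fixes every other index. I claim that $\sigma':=\tau\sigma$ witnesses $v\preceq w'$.

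Two of the three requirements in Definition~\ref{defpartialord} are immediate. For divisibility, applying $\tau$ to $\sigma v\mid w=x_1^{b_1}\cdots x_n^{b_n}$ yields $\tau\sigma v\mid x_2^{b_1}\cdots x_{n+1}^{b_n}$, which in turn divides $w'=x_1^c x_2^{b_1}\cdots x_{n+1}^{b_n}$. The lex inequality $v\le w'$ holds because $w'$ has $x_{n+1}$ in its support (using $b_n>0$), whereas $v$'s support lies in $\{x_1,\ldots,x_n\}$, so $w'$ dominates $v$ in the highest-variable slot.

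The substantive step is the order-preservation clause: $\sigma'u\le\sigma'v$ for every monomial $u\le v$. The key observation is that any $u\le v$ has support in $\{x_1,\ldots,x_n\}$, since otherwise $u$ would involve some $x_k$ with $k>n$ and thus exceed $v$ in lex (which reads the highest-indexed variable first). From the hypothesis $\sigma u\le\sigma v$, together with $\sigma v\mid w$, the same support containment propagates to $\sigma u$ and $\sigma v$. On monomials supported in $\{x_1,\ldots,x_n\}$, $\tau$ acts as the pure index-shift $i\mapsto i+1$---the wrap-around $\tau(n+1)=1$ is never triggered---and this shift is lexicographically order-preserving, since the comparison is determined by the exponent tuple read from the highest variable down, which is left intact up to appending a trailing zero exponent for $x_1$. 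Hence $\tau\sigma u\le\tau\sigma v$, as required.

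The main obstacle, and what dictates the precise choice of $\tau$, is ensuring that the wrap-around of the cycle never interferes with any lex comparison; the support analysis sketched above is exactly what rules this out. Everything else is a mechanical verification.
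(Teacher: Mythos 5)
Your proof is correct: all three clauses of Definition~\ref{defpartialord} are verified, and the key point---that every $u\le v$ and every $\sigma u\le \sigma v$ is supported in $\{x_1,\dots,x_n\}$, so the wrap-around of the cycle $(1\,2\,\cdots\,n{+}1)$ is never triggered and the shift $i\mapsto i+1$ preserves the lex order---is exactly the substantive step. The paper defers this lemma to \cite{AH}, where essentially the same argument (composing the given witness with an index shift) is used, so your approach matches the intended one.
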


\begin{lemma}\label{twoshiftuplem}
  Suppose that $x_1^{a_1}\cdots x_n^{a_n} \preceq x_1^{b_1}\cdots x_n^{b_n}$,
  where $a_i,b_j\in\N$, $b_n>0$.   Then for any $a,b \in \N$ such that $a
  \leq b$, we have  $x_1^ax_2^{a_1}\cdots x_{n+1}^{a_n} \preceq
  x_1^bx_2^{b_1}\cdots x_{n+1}^{b_{n}}$.
\end{lemma}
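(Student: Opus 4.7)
The plan is to take a witness $\sigma$ for the given relation $v := x_1^{a_1}\cdots x_n^{a_n} \preceq w := x_1^{b_1}\cdots x_n^{b_n}$ and lift it by an index-shift to a permutation $\sigma'$ witnessing the shifted relation $v' := x_1^a x_2^{a_1}\cdots x_{n+1}^{a_n} \preceq w' := x_1^b x_2^{b_1}\cdots x_{n+1}^{b_n}$. Using Lemma \ref{infpermfiniteperm} I would first assume $\sigma$ is a finite permutation, and then define $\sigma'(1) = 1$ together with $\sigma'(j) = \sigma(j-1) + 1$ for $j \geq 2$. By construction $\sigma'$ is again a bijection of the positive integers, i.e., an element of ${\mathfrak S}_\infty$.

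Once $\sigma'$ is in hand, I would dispose of the easy clauses of Definition \ref{defpartialord} first. Letting $T$ denote the ``up-shift'' monomial operation $x_i \mapsto x_{i+1}$, direct expansion gives $\sigma' v' = x_1^a\cdot T(\sigma v)$ and $w' = x_1^b\cdot T(w)$, so the divisibility $\sigma v \mid w$ together with $a \le b$ yields $\sigma' v' \mid w'$. The underlying lex inequality $v' \le w'$ follows similarly from $v \le w$ and $a \le b$, the key point being that under the ``larger-indexed variable dominates'' convention the $x_1$-exponent occupies the least significant slot of the lex comparison.

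The main content is the monomial order clause: for every monomial $u' \le v'$, we need $\sigma' u' \le \sigma' v'$. First I would observe that any such $u'$ involves only $x_1,\dots,x_{n+1}$, because any higher-index factor would dominate the lex comparison and push $u'$ above $v'$. So one can write $u' = x_1^{c_0}\, T(u)$ for a unique monomial $u = x_1^{c_1}\cdots x_n^{c_n}$, and walking through the lex comparison of $u'$ with $v'$ from index $n+1$ down to index $1$ shows that the hypothesis $u' \le v'$ splits cleanly into two cases: either (A) $u < v$ strictly, or (B) $u = v$ and $c_0 \le a$. In case (A), the witness property of $\sigma$ plus its injectivity forces $\sigma u < \sigma v$ strictly; applying $T$ preserves this strict lex inequality, and since $x_1$ is the tiebreaker of last resort, prepending the $x_1$-factors $x_1^{c_0}$ and $x_1^a$ cannot overturn a comparison already settled at a higher index, so $\sigma' u' < \sigma' v'$. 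In case (B), the two outputs agree outside the $x_1$-slot, where $c_0 \le a$ closes the comparison. I expect the only real obstacle to be organizational: keeping the interplay of the shift $T$, the prepended $x_1$-factors, and the lex convention straight is what makes this a lemma rather than an observation, but the ``tiebreaker of last resort'' viewpoint reduces it to the two cases above.
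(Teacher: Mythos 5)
Your proof is correct and is essentially the argument the paper defers to \cite{AH}: lift the witness $\sigma$ by the index shift $\sigma'(1)=1$, $\sigma'(j)=\sigma(j-1)+1$ for $j\geq 2$, and verify the clauses of Definition \ref{defpartialord}, with the lex-order clause settled by the dichotomy $u<v$ versus ($u=v$ and $c_0\leq a$), exactly as you do. One minor remark: the appeal to Lemma \ref{infpermfiniteperm} is unnecessary --- and slightly off, since a finite permutation agreeing with $\sigma$ on $v$ need not remain a witness --- but nothing in your construction actually requires $\sigma$ to be finitary.
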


The next fact is essentially a consequence of \cite[Lemma 2.14]{AH}.

\begin{lemma}\label{addtoendlem}
Let $u,v \in \Omega$ and set $n$ to be the largest index of indeterminates
appearing in $v$.  If $u \preceq v$, then there is a witness $\sigma \in {\mathfrak S}_n$,
and if $a,b \in \N$ are such that $a \leq b$, then $u x_{n+1}^{a} \preceq v x_{n+1}^{b}$.
\end{lemma}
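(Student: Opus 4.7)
The plan is to base both assertions on a single observation about the paper's lex order: since $x_1<x_2<\cdots$, if $w\leq u$ then every variable appearing in $w$ has index at most $m$, the largest index appearing in $u$. Otherwise $w$ would have positive exponent at some index $>m$ while $u$ has exponent $0$ there, making $w>u$ at the highest differing position. In particular, the behavior of any candidate witness $\sigma$ outside the finite set $\{1,\ldots,m\}$ is irrelevant to both defining clauses of $\preceq$, so one has complete freedom to modify $\sigma$ there.

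For the first assertion, fix any witness $\sigma\in{\mathfrak S}_\infty$ for $u\preceq v$ and let $m$ be the largest index in $u$. For each $j\in\{1,\ldots,m\}$ we have $x_j\leq u$, so the witness condition yields $x_{\sigma(j)}=\sigma x_j\leq \sigma u$; since $\sigma u\mid v$ and $v$ only uses indices $\leq n$, this forces $\sigma(j)\leq n$. Thus $\sigma$ restricts to an injection $\{1,\ldots,m\}\hookrightarrow\{1,\ldots,n\}$, whence also $m\leq n$. Extend this injection arbitrarily to a bijection $\tau\colon\{1,\ldots,n\}\to\{1,\ldots,n\}$, viewed as an element of ${\mathfrak S}_n\subseteq{\mathfrak S}_\infty$. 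Since every $w\leq u$ has support in $\{1,\ldots,m\}$, on which $\tau$ and $\sigma$ agree, the two witness properties transfer verbatim: $\tau u=\sigma u\mid v$ and $\tau w=\sigma w\leq\sigma u=\tau u$ for all $w\leq u$.

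For the second assertion, extend $\tau\in{\mathfrak S}_n$ to $\tau'\in{\mathfrak S}_{n+1}$ by $\tau'(n+1)=n+1$. Then $\tau'(ux_{n+1}^a)=(\tau u)x_{n+1}^a$ divides $vx_{n+1}^b$ (using $\tau u\mid v$ and $a\leq b$), and $ux_{n+1}^a\leq vx_{n+1}^b$ is immediate from $u\leq v$ and $a\leq b$. For the cancellation clause, write any $w\leq ux_{n+1}^a$ as $w=w'x_{n+1}^c$ with $w'$ supported in $\{1,\ldots,n\}$. If $c<a$, then $\tau'w$ is strictly below $\tau'(ux_{n+1}^a)$ at the index $n+1$ and we are done; if $c=a$, then $w'\leq u$, so the support of $w'$ lies in $\{1,\ldots,m\}$, and $\tau'w'=\tau w'\leq\tau u$ by the witness property of $\tau$, whence multiplying through by $x_{n+1}^a$ gives $\tau'w\leq\tau'(ux_{n+1}^a)$. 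I expect the main difficulty to lie entirely in the bookkeeping around the lex convention and the ``support-bounding'' observation from which everything flows; once that is secured, both parts reduce to routine manipulation of permutations agreeing on a finite set.
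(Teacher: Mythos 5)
Your proof is correct. Note that the paper itself supplies no argument for this lemma—it is dismissed with the remark that it is ``essentially a consequence of [AH, Lemma 2.14]''—so your self-contained verification straight from Definition \ref{defpartialord} is necessarily a different route, and a worthwhile one. The load-bearing observation, that $w\leq u$ in this lexicographic order forces the support of $w$ to lie below the largest index of $u$, is exactly right (it is implicitly used in the paper's own worked example following the definition), and it correctly justifies both of your moves: truncating an arbitrary witness $\sigma\in{\mathfrak S}_\infty$ to a permutation of $\{1,\ldots,n\}$ without disturbing either defining clause of $\preceq$, and then extending by fixing $n+1$ to handle the appended factor $x_{n+1}^a$ versus $x_{n+1}^b$. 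The case split $c<a$ versus $c=a$ in the cancellation clause is complete, since $c>a$ would contradict $w\leq ux_{n+1}^a$. The only points you leave tacit are degenerate ones ($u=1$, where the identity witnesses everything, and $b=0$, which forces $a=0$ and reduces to the hypothesis), and these cost nothing. What your approach buys is a proof readable without consulting \cite{AH}; what the paper's citation buys is brevity, at the price of leaving the reader to reconstruct precisely the support-bounding argument you have written out.
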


In this setting, we need a notion of the leading monomials of a
set of polynomials that interacts with the symmetric group action.
For a set of polynomials $I$, we define
\[ \text{lm}(I) = \< w \in \Omega :  \text{lm}(f) \preceq w, \ 0 \neq f \in I\>_K,\]
the span of all monomials which are $\preceq$ larger than leading monomials
in $I$.  If $I$ happens to be a symmetric ideal, then it follows from
Lemma \ref{cancellation} that \[\text{lm}(I) = \<\text{lm}(f): f \in I \>_K\] corresponds
to a more familiar set of monomials.  With these preliminaries in
place, we state the following definition from \cite{AH}.

\begin{definition}
We say that a subset $B$ of a symmetric
ideal $I \subseteq R$ is a \emph{Gr\"obner basis}\/ for $I$
if lm$(B) = \text{lm}(I)$.
\end{definition}

Additionally, a Gr\"obner basis is called \textit{minimal} if
no leading monomial of an element in $B$ is $\preceq$ smaller
than any other leading monomial of an element in $B$.
In analogy to the classical case, a Gr\"obner basis $B$
generates the ideal $I$:
\[ I =  \<B\>_{R[{\mathfrak S}_\infty]}.\]
The authors of \cite{AH} prove the following finiteness
result for symmetric ideals; it is an analog to the corresponding
statement for finite dimensional polynomial rings.  As a corollary,
they obtain Theorem \ref{onevarfinitegenthm}.

\begin{theorem}
A symmetric ideal of $R$ has a finite Gr\"obner basis.
\end{theorem}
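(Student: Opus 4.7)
The plan is to derive this finiteness result from the key claim that the symmetric cancellation partial order $\preceq$ on the monoid of monomials $\Omega$ is a \emph{well-partial-order} (wpo), meaning every subset of $\Omega$ has only finitely many $\preceq$-minimal elements (equivalently, every infinite sequence of monomials contains two comparable terms $v_i \preceq v_j$ with $i<j$).

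Assuming the wpo property, the theorem follows quickly. Given a symmetric ideal $I$, consider the set of leading monomials $L(I) = \{\text{lm}(f) : 0 \ne f \in I\} \subseteq \Omega$. By the wpo property, $L(I)$ has only finitely many $\preceq$-minimal elements $m_1, \ldots, m_k$. For each $i$, choose some $g_i \in I$ with $\text{lm}(g_i) = m_i$, and set $B := \{g_1, \ldots, g_k\}$. To see that $B$ is a Gr\"obner basis, let $w \in \text{lm}(I)$ be arbitrary; by definition there is $0 \ne f \in I$ with $\text{lm}(f) \preceq w$. Since $\text{lm}(f) \in L(I)$, minimality yields some $m_i$ with $m_i \preceq \text{lm}(f)$, and transitivity of $\preceq$ gives $m_i \preceq w$, so $w \in \text{lm}(B)$. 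Hence $\text{lm}(B) = \text{lm}(I)$, and $B$ is a finite Gr\"obner basis.

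The main obstacle is the wpo property of $\preceq$ itself. I would attempt this by Higman's lemma. Encode each monomial $w = x_{i_1}^{a_1} \cdots x_{i_n}^{a_n}$ with $i_1 < \cdots < i_n$ by its finite exponent sequence $(a_1, \ldots, a_n) \in \N^{<\omega}$. The condition $\sigma v \mid w$ for some $\sigma \in \mathfrak{S}_\infty$ corresponds, after choosing $\sigma$ wisely, to an order-preserving embedding of the exponent sequence of $v$ into that of $w$ such that the entry of $v$ is pointwise $\leq$ the matched entry of $w$. This is exactly Higman's embedding of finite sequences over the wpo $(\N, \leq)$, which Higman's lemma guarantees to be a wpo. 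The delicate step is the lex-preservation side condition ``$\sigma u \leq \sigma v$ for all $u \leq v$'' in Definition \ref{defpartialord}: a naive Higman witness might fail this. However, Lemma \ref{addtoendlem} indicates that the witness can always be chosen inside $\mathfrak{S}_n$ for $n$ the largest index in $v$, which constrains witnesses to a form that automatically respects lex order on the finite downward set $\{u : u \leq v\}$. I would combine this constraint with Higman's lemma (or its strengthening due to Nash-Williams via the minimal-bad-sequence argument) to show that $\preceq$ inherits the wpo property.
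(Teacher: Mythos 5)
Your proposal follows essentially the same route as the paper, which defers the proof of this theorem to \cite{AH}: there the result is deduced exactly as you describe from the fact that $\preceq$ is a well-partial-ordering of $\Omega$ (together with well-foundedness, which is automatic since $\preceq$ refines the lexicographic well-ordering $\leq$), and the well-partial-ordering is established via Higman's lemma applied to exponent sequences. The ``delicate step'' you flag is handled in \cite{AH} precisely by inductive use of Lemmas \ref{oneshiftuplem}, \ref{twoshiftuplem}, and \ref{addtoendlem}, which construct the required witnesses step by step so that the lex-preservation condition never has to be verified directly for a naive Higman embedding; with that substitution your plan is correct and matches the cited argument.
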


Although much of the intuition involving Gr\"obner bases from the finite dimensional case 
transfers over faithfully to the ring $R$, one needs to be somewhat careful in general.  
For example, monomial generators do not automatically form a Gr\"obner basis for
a symmetric ideal $I$ (see Example \ref{surpriseGBex} below).
However, we do have a description of minimal Gr\"obner bases for
monomial ideals, and this is the content Theorem \ref{monGBprop} below.  
To state it, we need to introduce a special class of permutations to give a 
more workable description of the symmetric cancellation partial order.  This 
description will be used in our algorithm that finds symmetric Gr\"obner
bases.

Fix a monomial $g = \mathbf{x}^{\mathbf{a}} = x_1^{a_1}\cdots x_n^{a_n}$. 
A \textit{downward elementary shift} (resp. \textit{upward elementary shift})
of $g$ is a permutation $\sigma$ which acts
on $\mathbf{a}$ as transposition of two consecutive coordinates, the smaller (resp. larger)
of which is zero.  A \textit{downward shift} (resp. \textit{upward shift})
of $g$ is a product of downward elementary shifts (resp. upward elementary shifts) that begin
with $g$. A \textit{shift permutation} of $g$ is either a downward shift or an upward shift of $g$.
If $g,h \in \Omega$ and $\sigma$ is an upward shift of $g$ with $h = \sigma g$, then we
write $g \sim_{\sigma} h$.  
For example, $\sigma = (341)$ is an upward elementary 
shift of $g =  x_2^3x_3^{\phantom{3}}x_5^2$ and  $\tau = (32)(56)(341)$ is an upward shift of $g$; 
in this case, $g \sim_{\tau} h$ for $h = x_3^3x_4^{\phantom{3}}x_6^2$.

The following fact should be clear.

\begin{lemma}\label{translem}
If $g \sim_{\sigma} h$ and $h \sim_{\tau} k$, then $g \sim_{\tau \sigma} k$.
\end{lemma}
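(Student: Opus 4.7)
The plan is to unfold the definition of \textit{upward shift} and paste two sequences of elementary moves together. By the definition given just before the lemma, $g \sim_\sigma h$ means that $\sigma$ can be written as a product $\sigma = \sigma_n \sigma_{n-1} \cdots \sigma_1$ where $\sigma_1$ is an upward elementary shift of $g$, each subsequent $\sigma_{i+1}$ is an upward elementary shift of the intermediate monomial $\sigma_i \sigma_{i-1} \cdots \sigma_1 g$, and $\sigma g = h$. Similarly $\tau = \tau_m \tau_{m-1} \cdots \tau_1$, where the $\tau_j$ form an analogous admissible sequence starting at $h$, with $\tau h = k$.

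The key step is to observe that the concatenated list
\[
\sigma_1, \sigma_2, \ldots, \sigma_n, \tau_1, \tau_2, \ldots, \tau_m
\]
is itself an admissible sequence of upward elementary shifts beginning at $g$. The first $n$ moves are admissible by hypothesis and end at the monomial $\sigma g$. But $\sigma g = h$ is precisely the starting monomial of the $\tau_j$-sequence, so $\tau_1$ is an upward elementary shift of the intermediate monomial $\sigma_n \cdots \sigma_1 g$, and each $\tau_{j+1}$ is then an upward elementary shift of $\tau_j \cdots \tau_1 \sigma_n \cdots \sigma_1 g$. Thus their product $\tau_m \cdots \tau_1 \sigma_n \cdots \sigma_1 = \tau \sigma$ is, by definition, an upward shift of $g$.

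Finally, $(\tau \sigma) g = \tau (\sigma g) = \tau h = k$, so $g \sim_{\tau\sigma} k$. There is no real obstacle here beyond bookkeeping; the statement is essentially a restatement of the fact that ``admissible sequence'' is closed under concatenation at a matching intermediate monomial. The one point worth double-checking is the order of composition: since permutations act from the left and we read $h = \sigma g$ as ``$\sigma$ applied to $g$,'' the product that realizes the combined shift is $\tau \sigma$ (not $\sigma \tau$), which matches the claim of the lemma.
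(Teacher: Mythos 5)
Your proof is correct, and it is exactly the argument the paper has in mind: the paper offers no proof at all (it introduces the lemma with ``The following fact should be clear''), and the natural justification is precisely your observation that two admissible sequences of upward elementary shifts concatenate at the matching intermediate monomial $h=\sigma g$, together with the composition-order check $(\tau\sigma)g=\tau(\sigma g)=k$. Nothing further is needed.
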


A more concrete description of these permutations is given by the
following straightforward lemma, which follows directly from the
definitions.
 
\begin{lemma}\label{shiftcharac}
Let $g$ be a monomial, and let $i_1 < \cdots < i_n$ be those 
indices appearing in the indeterminates dividing $g$.
Then $\sigma$ is an upward shift permutation of $g$ if and only if 
\[\sigma i_1 < \sigma i_2 <  \cdots < \sigma i_n \ \ \text{and} \ \  \sigma i_k \geq i_k, \ \ \ k = 1,\ldots,n.\]
\end{lemma}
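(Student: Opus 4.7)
The plan is to prove both implications by induction, exploiting the very restricted effect that a single upward elementary shift has on the sorted list of support indices of a monomial.

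For the forward direction I would induct on the length $m$ of a decomposition $\sigma = \tau_m \cdots \tau_1$ into elementary upward shifts of the successive monomials $g, \tau_1 g, \tau_2\tau_1 g, \dots$. The key observation is that if $\tau$ is an upward elementary shift of a monomial $h$ whose sorted support indices are $k_1 < \cdots < k_n$, with underlying transposition $(l, l+1)$ where the larger coordinate $l+1$ is zero in $h$, then $\tau$'s action on $\{k_1, \ldots, k_n\}$ is either the identity (when $l \notin \{k_1, \ldots, k_n\}$) or sends exactly one $k_s$ to $k_s + 1$ while fixing the others (when $l = k_s$, and then $k_{s+1} > k_s + 1$ since $k_s + 1 = l+1 \notin \{k_1, \ldots, k_n\}$). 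Either way, sortedness is preserved and no support index decreases. Iterating $m$ times yields $\sigma i_1 < \cdots < \sigma i_n$ and $\sigma i_k \ge i_k$ for every $k$.

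For the converse, given $\sigma$ satisfying the stated conditions I would induct on the nonnegative integer $d := \sum_{k=1}^{n}(\sigma i_k - i_k)$. When $d = 0$ each $\sigma i_k = i_k$, so $\sigma$ fixes the support of $g$ pointwise and therefore $\sigma g = g$; its action on the exponent vector is trivial, so $\sigma$ realizes the trivial transposition $(j, j+1)$ for any $j$ with both $j, j+1$ outside the (finite) support, making it an upward elementary shift of $g$. When $d > 0$ I would pick the largest $k$ with $\sigma i_k > i_k$. Maximality combined with the strict inequalities forces $\sigma i_{k+1} = i_{k+1}$ and $i_{k+1} > \sigma i_k \ge i_k + 1$, so $i_{k+1} \ge i_k + 2$ and the slot $i_k + 1$ is vacant in the support of $g$. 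Taking $\tau = (i_k, i_k + 1)$, which is then an upward elementary shift of $g$, a short verification shows that $\sigma' := \sigma \tau^{-1}$ satisfies the hypotheses of the lemma with respect to $\tau g$ and has $d' = d - 1$. The inductive hypothesis writes $\sigma'$ as an upward shift of $\tau g$, and Lemma \ref{translem} then lets me prepend $\tau$ to obtain $\sigma = \sigma' \tau$ as an upward shift of $g$.

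The main obstacle will be guaranteeing, at each step of the backward induction, that the slot $i_k + 1$ is actually empty in the current support; this is exactly where the maximality of $k$ coupled with the strict inequality $\sigma i_k < \sigma i_{k+1}$ earns its keep. Without that maximality one could be forced to shift an index into a slot still occupied by another support index, and the decrement of $d$ would stall. A secondary bookkeeping point, absorbed into the $d=0$ base case, is that any permutation fixing the support of $g$ pointwise acts trivially on the exponent vector and so already qualifies as a (trivial) upward elementary shift.
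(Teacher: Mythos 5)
Your proof is correct, and it supplies an argument the paper does not actually give: the authors dismiss Lemma \ref{shiftcharac} as ``following directly from the definitions'' and defer all such proofs to a longer version, so there is no in-paper proof to compare against. Your two inductions are the natural way to fill the gap: the forward direction by induction on the number of elementary shifts, using the fact that each elementary shift moves at most one support index, and by exactly one slot into a vacant position; the converse by induction on the total displacement $d=\sum_k(\sigma i_k - i_k)$, where your choice of the \emph{largest} $k$ with $\sigma i_k>i_k$ correctly guarantees (via $\sigma i_k<\sigma i_{k+1}=i_{k+1}$) that the slot $i_k+1$ is vacant, so that $(i_k,i_k+1)$ really is an upward elementary shift and $d$ drops by one. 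The bookkeeping $\sigma=\sigma'\tau$ is consistent with Lemma \ref{translem} and the left action.

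One caveat worth making explicit. The paper defines an upward elementary shift of $g$ only by its \emph{effect on the exponent vector} (``acts on $\mathbf{a}$ as a transposition of two consecutive coordinates''), and its own examples (e.g.\ $(341)$, or $(14)(23)$ acting on $x_2$) show the permutation need not literally be that transposition. Your key observation in the forward direction --- that $\tau$ sends each support index $k_s$ either to itself or to $k_s+1$ --- silently assumes that $\tau$ agrees with the underlying transposition $(l,l+1)$ \emph{on the support indices themselves}. Under the most literal reading of the definition this can fail: for $g=x_1x_3$, a permutation with $1\mapsto 4$ and $3\mapsto 1$ has the same effect on the exponent vector as the elementary shift $(34)$, yet scrambles the order of the support, and the lemma's forward implication would be false for it. This is a defect of the paper's definition rather than of your argument (the lemma is only true under the reading you adopt, and that reading is consistent with all of the paper's examples), but you should state the convention you are using at the point where the key observation is invoked.
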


The following fact gives a relationship between shift permutations and 
the symmetric cancellation partial order.

\begin{lemma}\label{invshiftlem}
Let $g$ and $h$ be monomials with $g \sim_{\sigma} h$ for some $\sigma \in {\mathfrak S}_\infty$. 
Then $g \preceq h$.  Moreover, we have $h \sim_{\sigma^{-1}} g$.
\end{lemma}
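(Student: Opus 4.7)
The plan is to decompose $\sigma$ into a sequence of upward elementary shifts and argue by induction on the length of this decomposition. Write $\sigma = \sigma_r \cdots \sigma_1$, where each $\sigma_i$ is an upward elementary shift of $g_{i-1} := \sigma_{i-1} \cdots \sigma_1 g$ (with $g_0 = g$ and $g_r = h$); so $\sigma_i$ is a transposition $(k_i, k_i+1)$ with $(g_{i-1})_{k_i+1} = 0$. The heart of the argument is the elementary case $r = 1$, which can be chained together by composition as in Lemma~\ref{translem}.

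For the relation $g \preceq h$, I claim that $\sigma$ itself serves as a witness: $\sigma g = h$ divides $h$ trivially, so what remains is $g \leq h$ in the lex order and the witness condition $u \leq g \Rightarrow \sigma u \leq \sigma g$. For a single elementary shift $\tau = (k, k+1)$ of $g$ with $g_{k+1} = 0$, the monomial $\tau g$ transfers the exponent at $x_k$ over to $x_{k+1}$, weakly increasing the monomial in the induced lex order since higher-indexed variables dominate. For the witness condition, I would perform a case analysis on the largest coordinate $N$ at which $u$ and $g$ differ: the cases $N > k+1$, $N = k$, and $N < k$ each allow the strict inequality to propagate after applying $\tau$ to both sides, whereas the case $N = k+1$ is vacuous since $u_{k+1} \geq 0 = g_{k+1}$ forbids $u_{k+1} < g_{k+1}$. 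Iterating through the decomposition of $\sigma$ passes from $u \leq g$ to $\sigma_1 u \leq g_1$, then to $\sigma_2 \sigma_1 u \leq g_2$, and so on, terminating at $\sigma u \leq h$.

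The ``moreover'' clause $h \sim_{\sigma^{-1}} g$ follows once I observe that the inverse of each upward elementary shift $\sigma_i = (k_i, k_i+1)$ of $g_{i-1}$ is the same transposition, but now acts naturally as a downward elementary shift of $g_i = \sigma_i g_{i-1}$, whose $k_i$-th coordinate has become zero. Reversing the order of composition, $\sigma^{-1} = \sigma_1^{-1} \cdots \sigma_r^{-1}$ is a shift permutation taking $h$ back to $g$, which realizes the claimed inverse relation. The main obstacle is the case analysis in the elementary step; the crux is that the non-negativity of exponents makes the pathological case $N = k+1$ vacuous, and one must carefully track which coordinates of $u$ and $g$ are affected by $\tau$ to conclude $\tau u \leq \tau g$ from $u \leq g$.
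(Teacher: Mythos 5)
Your argument is correct, and its opening move coincides with the paper's: both reduce to a single upward elementary shift by factoring $\sigma = \sigma_r\cdots\sigma_1$ and chaining (the paper invokes transitivity of $\preceq$ together with Lemma~\ref{translem}; you chain the witnesses explicitly, which works because at each stage the divisibility $\sigma_i g_{i-1}\mid g_i$ is an equality). Where you genuinely diverge is in the elementary step. The paper writes $g$ and its shift out in coordinates and inducts on the number of variables $n$, delegating the base case and the inductive steps to Lemmas~\ref{oneshiftuplem}, \ref{twoshiftuplem}, and~\ref{addtoendlem}, which are imported from \cite{AH} without proof. You instead verify Definition~\ref{defpartialord} directly: you exhibit the literal transposition $\tau=(k,k+1)$ as a witness and check $u\le g\Rightarrow\tau u\le\tau g$ by locating the highest coordinate $N$ where $u$ and $g$ differ, the only dangerous case $N=k+1$ being vacuous because $g_{k+1}=0$. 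This is more self-contained, and it makes the witness explicit --- a point of real value here in light of Remark~\ref{carefulwitness}, which warns that a permutation realizing $g\sim_\sigma h$ need not witness $g\preceq h$; your case analysis shows the canonical product of consecutive transpositions always does. One definitional nicety: an elementary shift is only required to \emph{act} on the exponent vector as a consecutive transposition, so you should replace each $\sigma_i$ by the literal transposition before arguing --- harmless, since $g\preceq h$ depends only on $g$ and $h$, and your treatment of the ``moreover'' clause (inverses of upward elementary shifts are downward elementary shifts of the shifted monomial, composed in reverse order) matches what the paper dismisses as clear from the definitions.
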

\begin{proof}
By transitivity and Lemma \ref{translem}, we may 
suppose that $\sigma$ as in the statement of the lemma acts on $g$ by
transposing $x_i$ and $x_{i+1}$.  
Write $g = x_1^{a_1} \cdots x_i^{a_i}x_{i+2}^{a_{i+2}} \cdots x_n^{a_n}$ with
$a_n > 0$; we must verify that 
\[ x_1^{a_1} \cdots x_i^{a_i}x_{i+2}^{a_{i+2}} \cdots x_n^{a_n}
\preceq x_1^{a_1} \cdots x_{i-1}^{a_{i-1}}x_{i+1}^{a_i} x_{i+2}^{a_{i+2}} \cdots x_n^{a_n}.\]
This is proved by induction on $n$.  When $n=1$, we have $i =1$, and the
claim reduces to Lemma \ref{oneshiftuplem}.  In general, we have two cases to consider.
If $i = n > 1$, then the claim follows from Lemma \ref{twoshiftuplem} and induction.
Alternatively, if $i < n$ and $n > 1$, then we may apply Lemma \ref{addtoendlem} and 
induction.  The second claim is clear from the definitions.
\end{proof}

\begin{remark}\label{carefulwitness}
A word of caution is in order.  Suppose that $g$ and $h$ are monomials with 
$g \sim_{\sigma} h$ for some  $\sigma \in {\mathfrak S}_\infty$.  Then it can happen that
$\sigma$ is \textit{not} a witness for the (valid) relation $g \preceq h$.  For example, if $\sigma = (14)(23)$, 
$g = x_2$, and $h = x_3$, then $g \sim_{\sigma} h$.  However, the relation $x_1 \leq x_2$
does not imply $\sigma x_1 \leq \sigma x_2$ as one can easily check.
\end{remark}

We now state a new characterization of the symmetric cancellation partial order.


\begin{theorem}\label{symordercharac}
Two monomials $v$ and $w$ satisfy  $v \preceq w$ if and only if
there is an upward shift $\sigma \in {\mathfrak S}_N$ of $v$ such that
$\sigma v | w$, where $N$ is the largest index of indeterminates appearing in $w$.
\end{theorem}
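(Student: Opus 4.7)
The backward direction is short. Given an upward shift $\sigma$ of $v$ with $\sigma v \mid w$, Lemma \ref{invshiftlem} yields $v \preceq \sigma v$. Since $\preceq$ refines the divisibility partial order, $\sigma v \mid w$ implies $\sigma v \preceq w$, and transitivity of $\preceq$ gives $v \preceq w$.

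For the forward direction, write $v = x_{i_1}^{a_1} \cdots x_{i_n}^{a_n}$ with $i_1 < \cdots < i_n$ and each $a_k > 0$. Assuming $v \preceq w$, Lemma \ref{addtoendlem} produces a witness $\tau \in \mathfrak{S}_N$ for this relation, so in particular $\tau v \mid w$. The plan is to show that $\tau$ itself already satisfies the two conditions of Lemma \ref{shiftcharac} characterizing an upward shift of $v$, namely $\tau i_1 < \tau i_2 < \cdots < \tau i_n$ and $\tau i_k \geq i_k$ for each $k$.

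Both conditions will follow from a single key inequality: for every index $j \in \{1,\ldots,N\}$ with $j < i_k$, we have $\tau j < \tau i_k$. To establish it, construct a probe monomial $u$ by moving the exponent $a_k$ from position $i_k$ to position $j$:
\[
u \ = \ v \cdot x_j^{a_k} / x_{i_k}^{a_k},
\]
with the understanding that if $j = i_l$ for some $l \neq k$, the exponents at $i_l$ simply combine. The exponent vectors of $u$ and $v$ differ only at $j$ and $i_k$; since $j < i_k$ and $v$ has the strictly larger exponent $a_k$ at the higher index $i_k$, one gets $u < v$ in lex order. Applying $\tau$, the exponent vectors of $\tau u$ and $\tau v$ differ only at $\tau j$ (where $\tau u$ is larger by $a_k$) and $\tau i_k$ (where $\tau v$ is larger by $a_k$). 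If $\tau j > \tau i_k$, then the higher differing index is $\tau j$, giving $\tau u > \tau v$ and contradicting the witness condition for $u \leq v$. Hence $\tau j < \tau i_k$.

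From this inequality both desired conditions are immediate. Specializing to $j = i_l$ for $l < k$ yields $\tau i_1 < \tau i_2 < \cdots < \tau i_n$. For the shift condition, note that $\tau$ maps the $i_k - 1$ elements of $\{1, \ldots, i_k - 1\}$ injectively into $\{1, \ldots, \tau i_k - 1\}$, forcing $\tau i_k \geq i_k$. Thus $\tau$ is an upward shift of $v$ in $\mathfrak{S}_N$ with $\tau v \mid w$, completing the forward direction. The main obstacle one might anticipate here is a case analysis on the relative sizes of consecutive exponents $a_{k-1}$ and $a_k$ (an adjacent-swap probe only contradicts the witness condition when $a_{k-1} < a_k$); the unified probe-monomial construction above avoids this split entirely by always moving the single exponent $a_k$ away from $i_k$, regardless of the surrounding structure of $v$.
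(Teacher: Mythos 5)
The paper states Theorem~\ref{symordercharac} without proof (its technical proofs are deferred to a longer version), so there is no in-text argument to compare against; judged on its own, your proof is correct. The backward direction correctly combines Lemma~\ref{invshiftlem} with the two facts the paper asserts about $\preceq$, namely that it refines divisibility and is a (transitive) partial order. In the forward direction, the probe-monomial argument is sound: with the lexicographic order induced by $x_1 < x_2 < \cdots$ (so two monomials are compared at the largest index where their exponents differ), the monomial $u = v\,x_j^{a_k}/x_{i_k}^{a_k}$ with $j < i_k$ does satisfy $u < v$, and $\tau j > \tau i_k$ would force $\tau u > \tau v$, contradicting the witness condition; this delivers both hypotheses of Lemma~\ref{shiftcharac} exactly as you say, and your unified probe does indeed sidestep the case analysis that an adjacent-transposition probe would require. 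One step you use tacitly: the counting argument ($\tau$ maps $\{1,\dots,i_k-1\}$ into $\{1,\dots,\tau i_k -1\}$) needs the key inequality for \emph{all} $j<i_k$, hence needs every index occurring in $v$ to be at most $N$; this is true, since $v\le w$ (equivalently, since $\tau v\mid w$ with $\tau\in{\mathfrak S}_N$), but it deserves a sentence.
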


The main result of this section is the following.

\begin{theorem}\label{monGBprop}
Let $G$ be a set of $n$ monomials 
of degree $d$, and let $N$ be the largest index of indeterminates appearing
in any monomial in $G$.  Then $H = {\mathfrak S}_N G$ is a 
(finite) Gr\"obner basis for $I = \<G\>_{R[{\mathfrak S}_\infty]}$.
Moreover, if we let
\[S =\{ h \in H :  \text{ there exists $g  \in H \backslash \{h\}$ and  $\sigma \in 
{\mathfrak S}_N$ with $g \sim_{\sigma} h$}\},\] 
then $H \backslash S$ is a minimal Gr\"obner basis 
for $I$.
\end{theorem}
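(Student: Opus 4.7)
The plan is to establish the two claims in the statement: (i) that $H$ is a Gr\"obner basis for $I$, and (ii) that $H\setminus S$ is a minimal Gr\"obner basis.

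For (i), the inclusion $\mathrm{lm}(H)\subseteq\mathrm{lm}(I)$ is immediate from $H\subseteq I$. For the reverse inclusion, since $I=\<G\>_{R[{\mathfrak S}_\infty]}$ is generated by monomials, every element of $I$ is a $K$-linear combination of monomials of the form $u\cdot\tau g$ with $u$ a monomial, $\tau\in{\mathfrak S}_\infty$, and $g\in G$. Hence the leading monomial $w$ of any nonzero element of $I$ is divisible by some $\tau g$. Writing $\tau g=x_{l_1}^{b_1}\cdots x_{l_n}^{b_n}$ with $l_1<\cdots<l_n$, I would set $h:=x_1^{b_1}\cdots x_n^{b_n}$. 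Since $g$ involves at most $N$ distinct variables (so $n\leq N$), one can exhibit $h$ as $\tau'g$ for some $\tau'\in{\mathfrak S}_N$ by matching the indices appearing in $g$ bijectively with $\{1,\dots,n\}$ in the appropriate order of exponents, and extending on the complements in $\{1,\dots,N\}$; thus $h\in H$. Now invoke Theorem \ref{symordercharac}: the permutation $\rho$ defined by $\rho(k)=l_k$ for $k=1,\dots,n$ (extended arbitrarily to a bijection) is an upward shift of $h$, since $l_k\geq k$ and $l_1<\cdots<l_n$, and $\rho h=\tau g$ divides $w$. Therefore $h\preceq w$ and $w\in\mathrm{lm}(H)$.

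For (ii), the key observation is that for monomials $g,h\in H$ (both of degree $d$), $g\preceq h$ is equivalent to $g\sim_\sigma h$ for some $\sigma$: by Theorem \ref{symordercharac}, the former yields an upward shift $\sigma$ of $g$ with $\sigma g\mid h$, and since $\deg(\sigma g)=d=\deg h$, divisibility forces equality; the converse is Lemma \ref{invshiftlem}. Combined with the antisymmetry of $\preceq$, this shows that $S$ is exactly the set of non-$\preceq$-minimal elements of $H$, so $H\setminus S$ coincides with the set of $\preceq$-minimal elements of the finite poset $(H,\preceq)$. Two items then remain, both routine: (a) every $h\in H$ admits a $\preceq$-minimal predecessor in $H\setminus S$ (by finiteness of $H$), so $\mathrm{lm}(H\setminus S)=\mathrm{lm}(H)=\mathrm{lm}(I)$; and (b) distinct $\preceq$-minimal elements of $H$ are pairwise incomparable, which is precisely the required minimality condition on $H\setminus S$.

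The main obstacle, in my view, is the construction in (i): producing the correct element of the ${\mathfrak S}_N$-orbit of $g$ together with an upward shift that witnesses $h\preceq w$ via Theorem \ref{symordercharac}. Working directly from Definition \ref{defpartialord} is hazardous (cf.\ Remark \ref{carefulwitness}), which is why the argument hinges on the cleaner shift-divisibility characterization of $\preceq$. Once that inclusion is established, part (ii) reduces to an elementary minimality argument on the finite partially ordered set $(H,\preceq)$.
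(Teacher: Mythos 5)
Your proof is correct and follows essentially the same route as the paper's: the reverse inclusion is obtained by shifting $\tau g$ down to an element of ${\mathfrak S}_N G$ and exhibiting an upward shift witnessing $\preceq$ (you apply Theorem \ref{symordercharac} directly where the paper factors through Lemma \ref{invshiftlem} and transitivity), and minimality follows from the same degree argument showing $g\preceq h$ in $H$ forces $g\sim_\sigma h$. Your explicit appeal to antisymmetry and the existence of $\preceq$-minimal predecessors in the finite poset $(H,\preceq)$ just spells out what the paper leaves implicit.
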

\begin{proof}
Let $G$, $H$, $S$, $N$, and $I$ be as in the statement of the theorem;  we first show that $H$ is a 
Gr\"obner basis for $I$. The inclusion $\text{lm}(H) \subseteq \text{lm}(I)$ is clear 
from the definition.  So suppose that $w \in \text{lm}(I)$ is a monomial; we must  show that 
$h \preceq w$ for some $h \in H$.  Set $w = u \sigma g$ for some monomial $u$, witness 
$\sigma \in {\mathfrak S}_\infty$, and $g \in G$.  Since  $\sigma g \preceq u\sigma g = w$,
it suffices to show that $h \preceq \sigma g$ for some $h \in H$.
Let $\tau$ be a downward shift that takes $\sigma g$ to a monomial $h$
with indices at most $N$.  Then $h$ has the same type (its unordered vector
of exponents) as $g$, and therefore there is a permutation $\gamma \in {\mathfrak S}_N$ such that
$h = \gamma g $. It follows that $h \in H$ and $h \sim_{\tau^{-1}} \sigma g$
so that $h \preceq \sigma g$ by Lemma \ref{invshiftlem}.

Next, we observe that $H \backslash S$ is still a Gr\"obner 
basis since $g \sim_{\sigma} h$ implies that $g \preceq h$.
Therefore, it remains to prove that $H \backslash S$ is minimal.
If $h,g \in H$ are related by $g \preceq h$, then $h = m \sigma g$ for 
a witness $\sigma$ and a monomial $m$.  Since each element of $H$
has the same degree, we have $m = 1$.  By Theorem \ref{symordercharac},
it follows that we may choose $\sigma \in {\mathfrak S}_N$ such that
$g \sim_{\sigma} h$.  Therefore, we are only removing unnecessary
elements from the Gr\"obner basis $H$ when we discard the monomials in
$S$. This completes the proof.
\end{proof}

\begin{corollary}
Let $G$ be a finite set of monomials, and let $N$ be the largest
index of indeterminates appearing in any monomial in $G$.  Then  ${\mathfrak S}_N G$
is a (not necessarily minimal) Gr\"obner basis for $I = \<G\>_{R[{\mathfrak S}_\infty]}$.
\end{corollary}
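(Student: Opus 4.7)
The plan is to observe that the argument in the proof of Theorem~\ref{monGBprop} showing $H = {\mathfrak S}_N G$ is a Gr\"obner basis does not actually use the equal-degree assumption; that assumption was needed only in the minimality part of the proof. Since the corollary explicitly allows a non-minimal basis, I just need to reprise that portion of the argument.

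In detail: the inclusion $\text{lm}(H) \subseteq \text{lm}(I)$ is immediate since $H \subseteq I$. For the reverse containment, let $w \in \text{lm}(I)$ be a monomial. Because $I$ is generated as an $R[{\mathfrak S}_\infty]$-module by the monomials of $G$, I can write $w = u \cdot \sigma g$ for some monomial $u \in \Omega$, some $\sigma \in {\mathfrak S}_\infty$, and some $g \in G$. The identity permutation witnesses $\sigma g \preceq w$, so it suffices to produce some $h \in H$ with $h \preceq \sigma g$.

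For this, I would choose a downward shift $\tau$ of $\sigma g$ that moves every occurring index back into $\{1, \ldots, N\}$. Such a $\tau$ exists because $\sigma g$ shares its unordered exponent vector with $g$, and the support of $g$ lies in $\{x_1, \ldots, x_N\}$ by hypothesis on $N$. Setting $h = \tau \sigma g$, the monomial $h$ has the same exponent type as $g$ and is supported on $\{x_1, \ldots, x_N\}$, so $h = \gamma g$ for some $\gamma \in {\mathfrak S}_N$, whence $h \in {\mathfrak S}_N G = H$. By construction $h \sim_{\tau^{-1}} \sigma g$, and Lemma~\ref{invshiftlem} then yields $h \preceq \sigma g$, as required.

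The only point to verify is that the equal-degree hypothesis was indeed invoked only in the minimality portion of the earlier proof: inspection shows it appears solely to force $m = 1$ when comparing leading monomials of elements of $H$ of equal degree, so the Gr\"obner basis portion of that argument transfers verbatim. No genuine obstacle arises.
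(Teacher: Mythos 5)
Your proposal is correct and is exactly the route the paper intends: the corollary follows from the first half of the proof of Theorem~\ref{monGBprop}, where the equal-degree hypothesis is never used (it enters only to force $m=1$ in the minimality argument). Your reprise of the downward-shift step and the appeal to Lemma~\ref{invshiftlem} matches the paper's argument verbatim.
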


\begin{example}\label{surpriseGBex}
The ideal $I = \<x_1^2 x_3^{\phantom{3}}\>_{R[{\mathfrak S}_\infty]}$ has a Gr\"obner
basis, \[H = \{x_1^{\phantom{3}} x_2^2, x_1^{\phantom{3}} x_3^2, x_1^2 x_2^{\phantom{3}}, 
x_2^{\phantom{3}} x_3^2, x_1^2 x_3^{\phantom{3}},x_2^2 x_3^{\phantom{3}}\}.\] 
However, it is not minimal.  Removing those elements that are the result 
of upward shifts, we are left with the following minimal Gr\"obner basis for $I$:  
$\{x_1^{\phantom{3}} x_2^2, x_1^2x_2^{\phantom{3}}\}$. 
\qed
\end{example}

\section{Reduction of polynomials}\label{symreduction}

Before describing our Gr\"obner basis algorithm, we must recall the ideas of reduction from
\cite{AH}.  Let $f\in R$, $f\neq 0$, and let $B$ be a set of nonzero polynomials
in $R$. We say that $f$ is \emph{reducible by $B$} if there exists
$g \in B$ such that  we have $\lm(g)\preceq \lm(f)$, witnessed by some $\sigma \in
{\mathfrak  S}_{\infty}$ and 
$$\lt(f) = a w \sigma \lt(g)$$
for some nonzero $a \in K$ and a monomial $w \in \Omega$ such that
$w \sigma \lm(g)=\lm(f)$.  In this case we write
$f\underset{B}\longrightarrow h$, where
$$h=f - \big(a w \sigma g\big),$$
and we say that $f$ \emph{reduces to $h$} by $B$.  We say that $f$ is
\emph{reduced} with respect to $B$ if $f$ is not reducible by $B$. By
convention, the zero polynomial is reduced with respect to $B$.
Trivially, every element of $B$ reduces to $0$.

The smallest quasi-ordering on $R$ extending the relation
$\underset{B}\longrightarrow$ is denoted by
$\underset{B}{\overset{*}\longrightarrow}$.  If $f,h\neq 0$ and
$f\underset{B}\longrightarrow h$, then $\lm(h)<\lm(f)$, by
Lemma~\ref{cancellation}.  In particular, every chain
$$h_0\underset{B}\longrightarrow h_1\underset{B}\longrightarrow h_2
\underset{B}\longrightarrow \cdots$$
with all $h_i\in R\setminus\{0\}$
is finite. (Since the term ordering $\leq$ is well-founded.) Hence
there exists $r\in R$ such that
$f\underset{B}{\overset{*}\longrightarrow} r$ and $r$ is reduced with
respect to $B$; we call such an $r$ a \emph{normal form} of $f$ with
respect to $B$.

\begin{lemma}\label{reduction}
  Suppose that $f\underset{B}{\overset{*}\longrightarrow} r$. Then
  there exist $g_1,\dots,g_n\in B$, $\sigma_1,\dots,\sigma_n\in {\mathfrak  S}_{\infty}$ and
  $h_1,\dots,h_n\in R$ such that
  $$f=r+\sum_{i=1}^n h_i\sigma_i g_i\quad \text{and}\quad \lm(f)\geq
  \max_{1\leq i\leq n}\lm(h_i\sigma_ig_i).$$
  \textup{(}In particular,
  $f-r\in \<B\>_{R[{\mathfrak  S}_{\infty}]}$.\textup{)}
\end{lemma}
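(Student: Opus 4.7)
The plan is to proceed by induction on the length $k$ of a chain of single-step reductions $f = h_0 \underset{B}{\longrightarrow} h_1 \underset{B}{\longrightarrow} \cdots \underset{B}{\longrightarrow} h_k = r$ witnessing $f\underset{B}{\overset{*}\longrightarrow} r$. The base case $k = 0$ is immediate: then $f = r$ and one takes the empty sum, in which case both assertions are vacuous.

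For the inductive step, I would unpack the definition of a single reduction $f \underset{B}{\longrightarrow} h_1$. By hypothesis there exist $g\in B$, $\sigma\in\mathfrak{S}_\infty$ witnessing $\lm(g)\preceq\lm(f)$, a nonzero $a\in K$, and a monomial $w\in\Omega$ with $w\sigma\lm(g)=\lm(f)$ and $\lt(f)=aw\sigma\lt(g)$, such that $h_1 = f - aw\sigma g$. Here is the one place Lemma~\ref{cancellation} enters: it guarantees $\lm(w\sigma g)=w\sigma\lm(g)=\lm(f)$, so the single summand $aw\sigma g$ has leading monomial exactly $\lm(f)$. Applying the inductive hypothesis to the shorter chain $h_1\underset{B}{\overset{*}\longrightarrow} r$ produces a representation
\[
h_1 = r + \sum_{i=1}^{n-1} h_i'\sigma_i' g_i',\qquad \lm(h_1)\ \geq\ \max_{1\leq i\leq n-1}\lm(h_i'\sigma_i'g_i').
\]

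Setting $h_n := aw$, $\sigma_n := \sigma$, $g_n := g$, and relabeling, I then obtain
\[
f\ =\ aw\sigma g + h_1\ =\ r + \sum_{i=1}^{n} h_i\sigma_i g_i.
\]
For the leading-monomial bound, the final term satisfies $\lm(h_n\sigma_n g_n)=\lm(f)$ by the calculation above, while for $i < n$ the inductive bound gives $\lm(h_i\sigma_i g_i)\leq \lm(h_1)$. The remark in the excerpt just preceding the statement records that a single reduction strictly decreases the leading monomial, i.e.\ $\lm(h_1) < \lm(f)$, and this pushes all inherited terms strictly below $\lm(f)$. Hence $\max_i \lm(h_i\sigma_i g_i) = \lm(f)$, as required.

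There is no real obstacle here; the proof is essentially a bookkeeping induction. The only substantive ingredient is Lemma~\ref{cancellation}, invoked at each step to convert the defining equation $\lt(f) = aw\sigma\lt(g)$ into the statement that the monomial-theoretic leading term of the summand $aw\sigma g$ actually equals $\lt(f)$; without this, one could not guarantee the alignment $\lm(aw\sigma g) = \lm(f)$ used to bound the new summand.
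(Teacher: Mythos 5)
Your proof is correct. The paper itself omits the argument for this lemma (it is one of the ``technical proofs'' deferred to the longer version and to \cite{AH}), but your induction on the length of the reduction chain --- peeling off one step $f \underset{B}\longrightarrow h_1 = f - aw\sigma g$, invoking Lemma~\ref{cancellation} to get $\lm(aw\sigma g)=w\sigma\lm(g)=\lm(f)$, and using the strict decrease $\lm(h_1)<\lm(f)$ to control the inherited summands --- is exactly the standard and intended argument.
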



\begin{lemma}\label{char GB}
  Let $I$ be a symmetric ideal of $R$ and $B$ be a set of nonzero
  elements of $I$. The following are equivalent:
\begin{enumerate}
\item $B$ is a Gr\"obner basis for $I$.
\item Every nonzero $f\in I$ is reducible by $B$.
\item Every $f\in I$ has normal form $0$. \textup{(}In particular,
  $I=\<B\>_{R[{\mathfrak  S}_{\infty}]}$.\textup{)}
\item Every $f\in I$ has unique normal form $0$.
\end{enumerate}
\end{lemma}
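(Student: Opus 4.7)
The plan is to prove the equivalence by establishing the cycle $(1)\Rightarrow(2)\Rightarrow(3)\Rightarrow(4)\Rightarrow(1)$. All four conditions sit naturally on a spectrum between ``leading-monomial data'' (condition (1)) and ``reduction behaviour'' (conditions (2)--(4)), so each step of the cycle is short; the work is in using the right earlier lemma at each transition.

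For $(1)\Rightarrow(2)$: given nonzero $f\in I$, the monomial $\lm(f)$ lies in $\lm(I)=\lm(B)$, a $K$-span of monomials, so it must equal one of those spanning monomials. This yields $g\in B$ with $\lm(g)\preceq\lm(f)$; picking a witness $\sigma$ for this relation and setting $w=\lm(f)/(\sigma\lm(g))$, Lemma \ref{cancellation} gives $\lm(\sigma g)=\sigma\lm(g)$, and then $\lt(f)=aw\sigma\lt(g)$ for a suitable scalar $a\in K$, which is exactly reducibility. For $(2)\Rightarrow(3)$: starting from any $f\in I$, reduction chains terminate (by Lemma \ref{cancellation} and well-foundedness of $\leq$), so $f$ has some normal form $r$. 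Lemma \ref{reduction} writes $f-r\in\<B\>_{R[\mathfrak{S}_\infty]}\subseteq I$, hence $r\in I$; if $r$ were nonzero, (2) would force it to be reducible, contradicting normal-form status. Thus $r=0$, and the same expression yields $f\in\<B\>_{R[\mathfrak{S}_\infty]}$, giving the ``in particular'' clause (together with $\<B\>\subseteq I$, which is immediate from $B\subseteq I$).

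For $(3)\Rightarrow(4)$: if $r$ is any normal form of $f\in I$, then $r=f-(f-r)\in I$ by Lemma \ref{reduction}, so by (3) there is a reduction $r\overset{*}\longrightarrow 0$; but $r$ is already reduced, so this chain has length zero, forcing $r=0$. For $(4)\Rightarrow(1)$: first note (4) implies (2), since an irreducible nonzero $f\in I$ would be its own normal form, contradicting uniqueness of $0$. To conclude $\lm(I)\subseteq\lm(B)$ (the reverse inclusion is immediate from $B\subseteq I$), take any spanning monomial $w$ of $\lm(I)$, so $\lm(f)\preceq w$ for some nonzero $f\in I$; applying (2) to $f$ gives $g\in B$ with $\lm(g)\preceq\lm(f)$, and transitivity of $\preceq$ places $w$ in $\lm(B)$.

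The main obstacle will not be any single implication but rather keeping track of which directions require uniqueness versus mere existence of a normal form. In $(3)\Rightarrow(4)$ in particular, it is important that condition (3) is an existence statement (``$f$ reduces to $0$''), yet uniqueness follows automatically because any other normal form inherits membership in $I$ via Lemma \ref{reduction} and then collapses by the same existence statement. A secondary care-point is the $K$-linear span definition of $\lm(I)$: one must exploit that its spanning set consists of monomials, so containment of a monomial in $\lm(I)$ immediately gives an individual witness $g\in B$ rather than a $K$-linear combination.
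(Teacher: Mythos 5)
Your proposal is correct and follows essentially the same route as the paper: the paper proves the same cycle $(1)\Rightarrow(2)\Rightarrow(3)\Rightarrow(4)\Rightarrow(1)$, dismissing the first three implications as consequences of Lemma \ref{cancellation}, Lemma \ref{reduction}, and the termination of reduction chains, and closing the loop with the same ``a nonzero reduced element of $I$ would have two distinct normal forms'' argument you use. Your write-up simply makes explicit the details (linear independence of monomials in the spans $\lm(I)$, $\lm(B)$, and transitivity of $\preceq$) that the paper leaves implicit.
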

\begin{proof}
  The implications
  (1)~$\Rightarrow$~(2)~$\Rightarrow$~(3)~$\Rightarrow$~(4) are either
  obvious or follow from the remarks preceding the lemma.  Suppose
  that (4) holds. Every $f\in I\setminus\{0\}$ with
  $\lt(f)\notin\lt(B)$ is reduced with respect to $B$, hence has two
  distinct normal forms ($0$ and $f$), a contradiction. Thus
  $\lt(I)=\lt(B)$.
\end{proof}

\section{Description of the Algorithm}\label{algorithm}

We begin by describing a method that checks when two monomials are $\preceq$ comparable,
returning a permutation (if it exists) witnessing the relation.
This is accomplished using the characterization given by Theorem \ref{symordercharac}.
In this regard, it will be useful to view monomials in $R$ as vectors of 
integers $v = (v_1,v_2,\ldots)$ with finite support in $\mathbb N^{\omega}$.

\begin{algorithm}\label{vwcompalg}\mbox{}(Comparing monomials in the symmetric
cancellation order)\\
Input: Two monomials $v$ and $w$ with largest indeterminate in $w$ being $N$.\\
Output: A permutation $\sigma \in {\mathfrak S}_N$ if $v \preceq w$; otherwise, \textbf{\emph{false}}.
\begin{enumerate}
\item Set $t := 1$, $match := \{\}$;
\item For $i = 1$ to N:
\item[]\hspace{0.5cm} For $j = t$ to N:
\item[]\hspace{1.0cm} If $v_i \neq 0$ and $v_i \leq w_j$, then 
\item[]\hspace{1.5cm} $t := j+1$;
\item[]\hspace{1.5cm} $match := match \cup \{(i,j)\}$;
\item[]\hspace{1.5cm} Break inner loop;
\item[]\hspace{0.5cm} $t := \max\{i+1,t\}$;

\item If $match$ contains fewer elements than the support of $v$, return \textbf{\emph{false}};

\item For $j = N$ down to $1$:
\item[]\hspace{0.5cm} Set $i :=$ largest integer not appearing as a first coordinate in $match$;

\item[]\hspace{0.5cm} If $j$ is not a second coordinate in $match$, then $match := match \cup (i,j)$;
\item Return the permutation that $match$ represents;
\end{enumerate}
\end{algorithm}

\begin{remark}
One must be somewhat careful when constructing the witness $\sigma$.  Changing the recipe
given in the algorithm above might produce incorrect results.  See also Remark \ref{carefulwitness}.
\end{remark}


\begin{example}
Consider the vectors $v = (1,2,0,2)$ and $w = (0,3,4,1)$ representing monomials
$x_4^2 x_2^2 x_1$ and $x_4 x_3^4 x_2^3$ respectively.  
Then, Algorithm \ref{vwcompalg} will return false since $match = \{(1,2),(2,3)\}$ 
contains less than three elements after Step $(2)$.

On the other hand, running the algorithm on inputs $v = (3,2,0,0,5)$ and
$w = (5,1,4,6,9)$ will produce an output of $\{(1,1),(2,3),(3,2),(4,4),(5,5)\}$, which
correctly gives the witness $\sigma = (23)$ to the relation 
$x_1^3 x_2^2 x_5^5 \preceq x_1^5 x_2x_3^4 x_4^6 x_5^9$.
\end{example}

We also need to know how to compute a reduction of a polynomial $f$ by another 
polynomial $g$ (assuming that $f$ is reducible by $g$).  Given a witness $\sigma$, however,
this is calculated in Lemma \ref{cancellation}.  Specifically, we set
\begin{equation}\label{sgpoly}
SG_{\sigma}(f,g) = f - \frac{\lt(f)}{\sigma \lt(g)} \sigma g.
\end{equation}

Notice that when $\sigma = (1)$, the polynomial $SG_{\sigma}(f,g)$ resembles 
the normal $S$-pair from standard Gr\"obner basis theory.

The general case of reducing
a polynomial $f$ by a set $B$ is performed as follows; it is a modification of
ordinary polynomial division in the setting of finite dimensional polynomial rings.

\begin{algorithm}\label{reducefbyg}\mbox{}(Reducing a polynomial $f$ by an ordered  set of polynomials $B$)\\
Input: Polynomial $f$ and an ordered set $B = (b_1,\ldots,b_s) \in R^s$.\\
Output: The reduction of $f$ by $B$. 
\begin{enumerate}
\item Set $p := f$, $r := 0$, $divoccured := 0$;
\item While $p \neq 0$:
\item[]\hspace{0.5cm} i := 1;
\item[]\hspace{0.5cm} $divoccured := 0$;

\item[]\hspace{0.5cm} While $i \leq s$;
\item[]\hspace{1.0cm} $g := b_i$;
\item[]\hspace{1.0cm} If there exists a $\sigma$ witnessing $\lm(g) \preceq \lm(p)$, then
\item[]\hspace{1.5cm} $p := SG_{\sigma}(p,g)$;
\item[]\hspace{1.5cm} $divoccured := 1$;
\item[]\hspace{1.5cm} Break inner loop;
\item[]\hspace{1.0cm} Else, $i := i + 1$;
\item[]\hspace{0.5cm} If $divoccured = 0$, then 
\item[]\hspace{1.0cm} $r := r + \lt(p)$;
\item[]\hspace{1.0cm} $p := p - \lt(p)$;
\item Return $r$;
\end{enumerate}
\end{algorithm}

\begin{example}
Let $f = x_3^2 x_2^2+x_2 x_1$ and $B = (x_3 x_1+x_2 x_1)$.  Reducing $f$ by
$B$ is the same as reducing $f$ by $x_3 x_1+x_2 x_1$ twice as one can check.  The
resulting polynomial is $x_2^3 x_1 + x_2 x_1$.  
\end{example}

Before coming to our main result, we describe a truncated version of it.

\begin{algorithm}\label{symGBtruncalg}\mbox{}(Constructing a truncated Gr\"obner basis for a symmetric ideal)\\
Input: An integer $N$ and polynomials $F = \{f_1,\ldots,f_n\} \subset  K[x_1,\ldots,x_N]$.\\
Output: A truncated Gr\"obner basis for $I = \<f_1, \ldots, f_n\>_{R[{\mathfrak S}_{\infty}]}$. 
\begin{enumerate}
\item Set $F' := F$;
\item For each pair $(f_i,f_j)$:
\item[]\hspace{0.5cm} For each pair $(\sigma, \tau)$ of permutations in ${\mathfrak S}_{N}$:

\item[]\hspace{1.0cm} $h := SG_{(1)}(\sigma f_i, \tau f_j)$;
\item[]\hspace{1.0cm} Set $r$ to be the reduction of $h$ by ${\mathfrak S}_{N}B'$; 
\item[]\hspace{1.0cm} If $r \neq 0$, then $B' := B' \cup \{r\}$;

\item Return $B'$;
\end{enumerate}
\end{algorithm}

\begin{remark}
As we have seen, it is not enough to choose $N$ to be the largest indeterminate
appearing in $F$ (c.f. Remark \ref{trunccounterex}). 
\end{remark}

We call the input $N$ the \textit{order} of a truncated basis for $F$.

\begin{algorithm}\label{symGBalg}\mbox{}(Constructing a Gr\"obner basis for a symmetric ideal)\\
Input: Polynomials $F = \{f_1,\ldots,f_n\} \subset  K[x_1,\ldots,x_N]$.\\
Output: A Gr\"obner basis for $I = \<f_1, \ldots, f_n\>_{R[{\mathfrak S}_{\infty}]}$. 
\begin{enumerate}
\item Set $F' := F$, $i := N$;
\item While true:
\item[]\hspace{0.5cm} Set $F'$ to be a truncated Gr\"obner basis of $F$ of order $i$;
\item[]\hspace{0.5cm} If every element of $F'$ reduces to $0$ by ${\mathfrak S}_{N}F$, then return $F$;
\item[]\hspace{0.5cm} $F := F'$;
\item[]\hspace{0.5cm} $i := i + 1$;
\end{enumerate}
\end{algorithm}

\begin{example}
Consider $F = \{x_1+x_2, x_1 x_2\}$ from the introduction.  One iteration of 
Algorithm \ref{symGBalg} with $i = 2$ gives $F' = \{x_1+x_2, x_1^2\}$.
The next two iterations produce $\{x_1\}$ and thus the algorithm returns with this 
as its answer. 
\end{example}


\end{document}